\newcommand{\D}{\mathbb{D}}
\font\sets=msbm10 scaled \magstep1
\def\R{\text{\sets R}}
\def\N{\text{\sets N}}
\def\C{\text{\sets C}}
\newcommand{\sss}{\mathcal{S}}
\renewcommand\Re{\operatorname{Re}}
\theoremstyle{plain}
\newtheorem{theorem}{Theorem}
\newtheorem{proposition}[theorem]{Proposition}
\newtheorem{lemma}[theorem]{Lemma}
\newtheorem{conjecture}[theorem]{Conjecture}
\author[O. Hirviniemi]{Olli Hirviniemi}
\address{University of Helsinki, Department of Mathematics and Statistics, P.O. Box 68, FIN-00014 University of Helsinki, Finland}
\email{olli.hirviniemi@helsinki.fi}
\author[I. Prause]{Istv\'an Prause}
\address{Department of Physics and Mathematics, University of Eastern Finland, P.O. Box 111, 80101 Joensuu, Finland}
\email{istvan.prause@uef.fi}
\author[E. Saksman]{Eero Saksman}
\address{University of Helsinki, Department of Mathematics and Statistics, P.O. Box 68, FIN-00014 University of Helsinki, Finland}
\email{eero.saksman@helsinki.fi}
\thanks{The work was supported by the Finnish Academy Coe 'Analysis and Dynamics' and the Finnish Academy projects  1266182, 1303765, and 1309940. }
\title[Localized regularity and finite distortion]{Localized regularity of planar maps of finite distortion}
\begin{document}

\begin{abstract}
In this article we study  fine regularity  properties   for mappings of finite distortion. Our main theorems yield  strongly localized regularity results in the borderline 
case in the class of maps of exponentially integrable distortion. Analogues of such results  were known earlier in the case of
 quasiconformal mappings. Moreover, we study regularity for  maps whose distortion has better than exponential integrability.

\smallskip

\noindent {\bf Keywords:} mappings of finite distortion, localized regularity, exponentially integrable distortion.

\noindent {\bf AMS (2010) Classification:} Primary 30C65

\end{abstract}

\maketitle

\section{Introduction}\label{se:intro}
Let $f : \Omega \to \C$ be a function where $\Omega\subset \C$ is a domain. We say that $f$ is a (homeomorphic and orientation preserving) mapping of finite distortion if following conditions are satisfied.

\smallskip

(i) $f \in W^{1,1}_{loc}(\Omega)$.

\smallskip

(ii) $f:\Omega\to f(\Omega)$ is a homeomorphism with $J_f\geq 0$ a.e.

\smallskip

(iii) $|Df|^2 = K_f(z) J(z,f)$ for a.e. $z \in \C$, where $K_f$ is a measurable function that is finite almost everywhere. 

\smallskip

In an analogous way one may define mappings of finite distortion on subdomains of $\R^d$. In this article we only consider mappings of finite distortion on the plane. 
A planar mapping of finite distortion satisfies a Beltrami equation
\[
\overline{\partial}f(z) = \mu_f(z) \partial f(z),
\]
where $\mu_f$ is a measurable function with $|\mu_f(z)|<1$ for a.e. $z$. One has $|\mu_f(z)|=\frac{K_f(z)-1}{K_f(z)+1}$. Here and henceforth we employ the standard notation $\overline{\partial}:=\frac{d}{d\overline{z}}=\frac{1}{2}(\partial_x+i\partial_y)$ and $\partial:=\frac{d}{dz}=\frac{1}{2}(\partial_x-i\partial_y)$.

An important subclass is formed by mappings of finite exponential distortion which have the property that for some positive constant $p>0$ one has
\begin{equation}\label{eq:fed}
e^{pK_f(z)}\in L^1_{loc}.
\end{equation}
A natural version of the measurable Riemann mapping theorem, Stoilow  factorization theorem, and many other basic features of the standard quasiconformal theory generalise to these classes.
 For a good account of the theory we refer the reader to \cite[Chapter 20]{AIM}. Improving on  earlier results \cite{FKZ} (which has result valid also in $n$ dimensions), it was shown in \cite{AGRS} that for a mapping of exponentially integrable distortion satisfying \eqref{eq:fed} there is the regularity
\begin{equation}\label{eq:fed2}
|Df|^2\log^{\beta}(e+|Df|)\in L^1_{loc}\qquad \textrm{for}\quad \beta<p-1,
\end{equation}
and this is not necessarily true for $\beta=p-1.$ Note that in the above results   one is  interested only in the local regularity of mappings of finite distortion with exponentially integrable distortion.  Similarly, in our work it is enough to consider only the regularity of principal maps near the origin since local regularity results may then be transferred by the Stoilow factorisation theorem to maps that are defined  on subdomains.  Let us recall that a principal map $f:\C\to\C$  is conformal (i.e.  $\mu_f(z) = 0$) outside the unit disk $\D$
and  $f(z) = z + O(1/|z|)$ near infinity. 

For (standard) $K$-quasiconformal maps (i.e. $K_f(z)\leq K<\infty$, where $K$  is a constant), the optimal area distortion \cite{As} implies that $Df\in L_{loc}^p$ for $p<\frac{2K}{K-1}$,  but this fails in general in the borderline case $p=\frac{2K}{K-1}$. There is a substitute \cite[Cor. 13.2.5]{AIM} in the form of inclusion in the weak space $Df\in L_{loc}^{\frac{2K}{K-1},\infty}$. Another kind of  result in the borderline case was given in \cite[Theorem 3.5]{AIPS1} stating that a $K$-quasiconformal map satisfies
\begin{equation}\label{eq:qc1}
(K-K_f(z))|Df|^{\frac{2K}{K-1}}\in L^1_{loc},
\end{equation}
which gives \emph{strongly localized regularity} information on the map, especially $\int_{K_f\leq K-\varepsilon}|Df|^{\frac{2K}{K-1}} <\infty$ for all $\varepsilon >0$. For further basic results on planar maps of exponentially integrable distortion we refer e.g. to \cite{BrJe,RSY, IKM, K, GKT} and the references therein.

The principal aim of the present note is to establish a strongly localized regularity result  for  mappings of finite distortion analogous to \eqref{eq:qc1}. Our main result states the following:
\begin{theorem}\label{thm:main1}
Assume that $f$ is a planar (homeomorphic) mapping of exponentially integrable distortion satisfying \eqref{eq:fed} with $p=1$.  Then it holds that
\begin{equation}\label{eq:mfd3}
\int_A \frac{1}{\log^{4+\varepsilon}\left(e+K_f\right)} |D f|^2 < \infty \qquad \textrm{for any}\quad \varepsilon>0
\end{equation}
for any compact subset $A\subset \Omega$.
\end{theorem}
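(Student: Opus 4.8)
The plan is to reduce \eqref{eq:mfd3} to principal maps and then to deduce it from the elementary area bound together with an Astala‑type area distortion estimate adapted to the exponentially integrable class. By the Stoilow factorization valid in this class (see \cite[Ch.~20]{AIM}), any $f$ as in the theorem factors locally as $f=\varphi\circ g$, with $g$ the principal solution of $\overline\partial g=\mu_f\,\partial g$ (with $\mu_f$ extended by $0$) and $\varphi$ holomorphic, so that $K_g=K_f$ a.e. Since $|Df|^2=|\varphi'\circ g|^2\,|Dg|^2$ and $\varphi'$ is bounded on the compact set $g(A)$, and since $g$ is conformal (hence $|Dg|^2=J_g$, with $J_g$ integrable) away from $\D$, it suffices to prove that for a principal map $f$ with $e^{K_f}\in L^1(\D)$ one has $\int_{\D}\log^{-4-\varepsilon}(e+K_f)\,|Df|^2<\infty$. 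Using $|Df|^2=K_fJ_f$, the goal becomes
\[
I_\varepsilon:=\int_{\D}\frac{K_f}{\log^{4+\varepsilon}(e+K_f)}\,J_f\,dz<\infty\qquad(\varepsilon>0).
\]

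Two ingredients would be used. First, the trivial area bound: as $f$ is a Sobolev homeomorphism with $J_f\ge 0$ and $f(\D)$ is bounded by the normalisation at infinity, $\int_{\D}J_f\le|f(\D)|=:M<\infty$. Second, and this is the substantive point, I would use the distributional estimate
\[
\int_{\{K_f\ge s\}}J_f\,dz\;\le\;C\,\frac{(\log s)^{3}}{s},\qquad s\ge 2,
\]
with $C$ depending only on $\|e^{K_f}\|_{L^1(\D)}$ and $M$; equivalently an area distortion bound $|f(E)|\le C(\log\log\tfrac1{|E|})^{3}/\log\tfrac1{|E|}$ for measurable $E\subset\D$, specialised to the exponentially small super‑level sets $E=\{K_f\ge s\}$, for which Chebyshev gives $\log\frac1{|E|}\ge s-\log\|e^{K_f}\|_{L^1(\D)}$.

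Given these, the conclusion is a dyadic summation. Split $\D=\{K_f<L\}\cup\bigcup_{2^j\ge L}E_j$ with $E_j=\{2^j\le K_f<2^{j+1}\}$ and $L$ a large constant. On $\{K_f<L\}$ the weight is $\le L$, contributing at most $LM$. On $E_j$ one has $K_f\log^{-4-\varepsilon}(e+K_f)\le 2^{j+1}(j\log2)^{-4-\varepsilon}$, while the second ingredient gives $\int_{E_j}J_f\le\int_{\{K_f\ge 2^j\}}J_f\le C(j\log2)^{3}2^{-j}$; hence
\[
I_\varepsilon\;\le\;LM+\sum_{2^j\ge L}\frac{2^{j+1}}{(j\log2)^{4+\varepsilon}}\cdot C\,\frac{(j\log2)^{3}}{2^{j}}\;\le\;LM+C'\sum_{j}j^{-1-\varepsilon}\;<\;\infty .
\]
This also accounts for the exponent $4$: three powers of $\log s$ from the area estimate plus one more for convergence of $\sum j^{-1-\varepsilon}$ force $\log^{-4-\varepsilon}$ rather than $\log^{-1-\varepsilon}$.

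The hard part is the second ingredient. For $K$‑quasiconformal maps the model is Astala's area distortion $|f(E)|\lesssim_K|E|^{1/K}$, but inserting the local bound $K_f\approx 2^j$ on $E_j$ only yields $|f(E_j)|\lesssim 1$, which is too weak: the decay must be squeezed out of $e^{K_f}\in L^1$ jointly with the global area bound. I expect this to proceed in the spirit of \cite{AGRS}: use the principal solution identity $\partial f=1+\mathcal S(\mu_f\,\partial f)$ for the Beurling transform $\mathcal S$, the sharp behaviour $\|\mathcal S\|_{L^p\to L^p}\approx 1+c|p-2|$ near $p=2$ together with invertibility of $I-\mathcal S(\mu_f\,\cdot)$ on $L^p$ for $p$ close to $2$ (with the closeness governed by the distortion), and a layer‑cake/stopping‑time decomposition over the super‑level sets of $K_f$, tracking how the operator‑norm constants, the Iwaniec--Sbordone type higher integrability of $J_f$ (cf. \cite{AIM}), and the summation of the layers compound into the cubic factor $(\log s)^{3}$. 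An alternative is to seek an ``exponential Burkholder'' functional $\Phi(Df)$ with $\int_\D\Phi(Df)\lesssim M+\|e^{K_f}\|_{L^1(\D)}$ and $\Phi(Df)\gtrsim|Df|^2\log^{-4-\varepsilon}(e+K_f)$, in analogy with the proof of \eqref{eq:qc1} in \cite{AIPS1}. Along the way one uses that maps of exponentially integrable distortion satisfy Lusin's condition (N), so that $\int_E J_f=|f(E)|$; for the argument above only the one‑sided bound $\int_E J_f\le|f(E)|$ is needed.
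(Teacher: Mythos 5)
Your layer-cake argument is a genuinely different route from the paper's. The paper proceeds by complex interpolation: it forms the analytic family $g_w=(1-|\mu|)^{(1-w)/2}\partial f_w$ with $\nu_w=\tfrac{\mu}{|\mu|}|\mu|^{w+\varepsilon}$, proves the endpoint bounds $M_0\lesssim\varepsilon^{-1/2}$ (Lemma~\ref{le:yksi}, via the Bieberbach area formula and concavity of $x\mapsto x^{2\varepsilon}$) and $M_1\lesssim\varepsilon^{-2}$ (Lemma~\ref{le:kaksi}), interpolates at $\theta=1-\varepsilon$ to obtain the family of weighted estimates $\int_\D(1-|\mu|)^\varepsilon|\partial f|^2\lesssim\varepsilon^{-4}$, and then converts this one-parameter family into the single logarithmic weight via the Mellin-type averaging of Lemma~\ref{le:apu3}. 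You instead go directly through a dyadic decomposition of the superlevel sets $\{K_f\geq 2^j\}$ together with a distributional bound $\int_{\{K_f\geq s\}}J_f\lesssim(\log s)^3/s$, bypassing the analytic family and the interpolation entirely; your argument is cleaner and more elementary at this level. The two routes converge at the crucial quantitative ingredient, though: your distributional bound is equivalent to the area distortion estimate $|f(E)|\lesssim(\log\log\tfrac1{|E|})^3/\log\tfrac1{|E|}$, which is precisely what falls out of the paper's Proposition~\ref{prop:area}(ii) upon optimizing $\beta=1-\eta$ with $\eta\approx 1/\log\log(1/|E|)$ (the cubic factor being $\delta^{-3\beta}$ with $\delta=p-\beta$). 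You correctly flag this as ``the hard part'' and only sketch it; the paper's Sections~\ref{se:neumann}--\ref{sec:proofoflemma2} are devoted exactly to proving this with explicit polynomial dependence on $\delta$, which is the genuinely technical content and where the exponent $3$ (and hence the final $4=3+1$, matching your accounting) is earned. One small addition worth making explicit: for a principal map the Bieberbach area inequality gives $\int_\D J_f\leq\pi$ automatically, so your constant $M$ is universal, and your approximation-by-quasiconformal step (which you omit) is needed to justify passing from the Neumann-series estimates to a genuine map of finite distortion, just as in the paper.
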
 

Arguments in the proof of Theorem \ref{thm:main1} also give the following result. 

\begin{theorem}\label{co:3}
With $f$ a mapping of integrable distortion satisfying  \eqref{eq:fed} for some $p>0$ we have
$$
|Df|^2\log^{p-1}(e+|Df|)[\log\log(|Df|+10)]^{-(1+3p+\varepsilon)}\in L^1_{loc}\qquad \textrm{for any}\quad \varepsilon>0. 
$$
\end{theorem}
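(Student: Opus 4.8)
The plan is to reduce, via the Stoilow factorisation as explained above, to the case of a normalised principal map $f$ with $\int_{\D}e^{pK_f}\,dA\le 1$, for which it suffices to bound $\int_{\thalf\D}$ of the integrand. On $\{|Df|<e^{10}\}\cap\thalf\D$ the integrand is bounded by a constant depending only on $p$ and $\varepsilon$, so this part is harmless, and it is enough to estimate the integral over $W:=\{|Df|\ge e^{10}\}\cap\thalf\D$, which we split as $W=W_1\cup W_2$ with $W_1=W\cap\{K_f\le\log(e+|Df|)\}$ and $W_2=W\cap\{K_f>\log(e+|Df|)\}$.

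The main ingredient is a $p$-dependent form of Theorem \ref{thm:main1}. Re-running the proof of Theorem \ref{thm:main1} with the exponent in \eqref{eq:fed} kept general --- the factor $\log^{p-1}(e+|Df|)$ appearing exactly as in \eqref{eq:fed2}, and the exponent $1+3p$ being produced by the polylogarithmic loss in the underlying Astala-type area-distortion estimate --- should yield, for every $\varepsilon>0$, the strongly localised bound
\[
\int_{\thalf\D}\frac{|Df|^{2}\,\log^{p-1}(e+|Df|)}{\log^{1+3p+\varepsilon}(e+K_f)}\,dA<\infty.\qquad(\star)
\]
On $W_1$ one has $\log(e+K_f)\le\log\bigl(e+\log(e+|Df|)\bigr)\lesssim\log\log(|Df|+10)$, using $|Df|\ge e^{10}$; hence the integrand of Theorem \ref{co:3} is dominated there by a constant multiple of the integrand of $(\star)$, and $W_1$ contributes a finite amount by $(\star)$.

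On $W_2$ we have $\log(e+|Df|)<K_f$, so the Jacobian $J_f=|Df|^{2}/K_f$ is comparatively small, and we argue directly. Decompose $W_2$ into the blocks $E_{\lambda,i}=W_2\cap\{\lambda\le K_f<2\lambda\}\cap\{e^{i}\le|Df|<e^{i+1}\}$, with $\lambda=2^{j}$ ($j\ge 3$) and $i\ge 10$ an integer; note $i<2\lambda$ on $E_{\lambda,i}$. Since there $|Df|^{2}\approx e^{2i}$, $\log(e+|Df|)\approx i$, $\log\log(|Df|+10)\approx\log i$ and $J_f\approx e^{2i}/\lambda$, writing $m_{\lambda,i}=|E_{\lambda,i}|$ the contribution of $E_{\lambda,i}$ to the integral in Theorem \ref{co:3} is $\approx e^{2i}\,i^{\,p-1}(\log i)^{-(1+3p+\varepsilon)}m_{\lambda,i}$. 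The area-distortion estimate $|f(E)|\lesssim\bigl(\log(e/|E|)\bigr)^{-p}$ for such maps, combined with $|\{K_f\ge\lambda\}|\lesssim e^{-p\lambda}$ from \eqref{eq:fed}, gives $|f(\{K_f\ge\lambda\})|\lesssim\lambda^{-p}$, whence
\[
\sum_{i}e^{2i}m_{\lambda,i}\lesssim\lambda\int_{\{K_f\ge\lambda\}}J_f\,dA=\lambda\,\bigl|f(\{K_f\ge\lambda\})\bigr|\lesssim\lambda^{1-p}.
\]
Combining this with $m_{\lambda,i}\lesssim e^{-p\lambda}$ and optimising $\sum_{i}e^{2i}\,i^{\,p-1}(\log i)^{-(1+3p+\varepsilon)}m_{\lambda,i}$ under these two constraints --- the geometric growth of $e^{2i}$ confines the admissible mass essentially to $i\lesssim\tfrac{p}{2}\lambda$, where $i^{\,p-1}(\log i)^{-(1+3p+\varepsilon)}\lesssim\lambda^{p-1}(\log\lambda)^{-(1+3p+\varepsilon)}$ --- shows that the $\lambda$-block contributes $\lesssim(\log\lambda)^{-(1+3p+\varepsilon)}$. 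Summing over $\lambda=2^{j}$ yields $\sum_{j\ge 3}(j\log 2)^{-(1+3p+\varepsilon)}<\infty$, completing the estimate on $W_2$ and hence the proof.

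The main obstacle is establishing $(\star)$, i.e.\ carrying the proof of Theorem \ref{thm:main1} through with a general exponent in \eqref{eq:fed}; this rests on the area distortion for exponentially integrable distortion and on an interpolation between the pointwise distortion $K_f$ and a threshold value, in the spirit of \cite{AIPS1}, and it is there that the exponent $1+3p$ is generated. The only other delicate point is the borderline character of the $W_2$ summation: with merely $\sum_{i}e^{2i}m_{\lambda,i}\lesssim\lambda^{1-p}$ the $\lambda$-series just fails to converge, and it is precisely the weight $[\log\log(|Df|+10)]^{-(1+3p+\varepsilon)}$, together with the exponential decay $m_{\lambda,i}\lesssim e^{-p\lambda}$, that restores summability.
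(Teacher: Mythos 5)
Your proposal takes a genuinely different --- and substantially harder --- route than the paper, and its central step $(\star)$ has a real gap that you yourself flag but do not close.

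The paper's own proof of Theorem \ref{co:3} does not pass through any strongly localized estimate in terms of $K_f$ at all, and therefore needs no $W_1/W_2$ decomposition. It re-runs the argument of Lemma \ref{le:kaksi} (the Jacobian-integrability step via Proposition \ref{prop:area}(ii) and Lemma \ref{le:HLP}, followed by the elementary Young-type inequality $xy\le x\log(e+x)+e^{(1+\varepsilon)y}$) with general $p$, tracking the $\varepsilon$-dependence to obtain
\[
\int_\D |Df|^2\log^{p-1}(e+|Df|)\,\log^{-\varepsilon}(e+|Df|)\le C\,\varepsilon^{-(1+3p)},
\]
i.e.\ a weight in $\log(e+|Df|)$, not $\log(e+K_f)$. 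Applying Lemma \ref{le:apu3} with $W=1/\log(e+|Df|)$ and $\alpha=1+3p$ then delivers Theorem \ref{co:3} directly. Crucially this argument never uses the Phragm\'en--Lindel\"of interpolation with the family $f_w$; that interpolation is only needed for the strongly localized Theorem \ref{thm:main1}, and the paper's remark after Theorem \ref{thm:main1} states explicitly that generalizing that interpolation to $p\neq 1$ would require working in Orlicz spaces rather than $L^2$, which they chose not to carry out.

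This is exactly where your proposal has its gap. Your $(\star)$ is precisely the $p$-generalization of Theorem \ref{thm:main1}, and you justify it only by asserting that ``re-running the proof of Theorem \ref{thm:main1} with the exponent in \eqref{eq:fed} kept general\ldots should yield $(\star)$.'' But for $p\neq 1$ Lemmas \ref{le:yksi} and \ref{le:kaksi} are $L^2$ statements tailored to the $p=1$ Jacobian integrability $J_f\log J_f\in L^1$; for other $p$ the natural Orlicz class is different and the Hadamard three-lines step no longer applies off the shelf. So the ``main obstacle'' you name is a genuine one, and your argument for $W_1$ rests on an unproved estimate that the paper deliberately avoids. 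Your $W_2$ analysis is essentially sound (the clean bound $|f(E)|\lesssim\log^{-p}(e/|E|)$ you invoke is not available in general --- Proposition \ref{prop:area}(ii) only gives it for $\beta<p$ with a $\delta^{-3\beta}$ blow-up --- but taking $\beta=p-1/\log\lambda$ gives $|f(\{K_f\ge\lambda\})|\lesssim(\log\lambda)^{3p}\lambda^{-p}$, and the extra $(\log\lambda)^{3p}$ is absorbed because the Chebyshev bound $m_{\lambda,i}\lesssim e^{-p\lambda}$ confines the mass to $i\gtrsim p\lambda/2$, so the $\lambda$-block still contributes $\lesssim(\log\lambda)^{-1-\varepsilon}$). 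However, since the $W_1$ half is the one that rests on the unproven $(\star)$, the proposal as a whole does not yet constitute a proof; switching the intermediate weight from $\log(e+K_f)$ to $\log(e+|Df|)$, as the paper does, removes the difficulty entirely.
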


In the radial case, one can  improve both Theorems \ref{thm:main1} and \ref{co:3}: 
\begin{theorem}\label{th:radial}
{\rm (i)}\quad Let $f: \C \to \C$ be a planar and radial homeomorphic mapping of finite distortion with exponentially integrable distortion satisfying \eqref{eq:fed} with $p=1$. 
Then
\[
\int_{A}\frac{1}{\log^{1+\varepsilon}\left(e+K_f\right)}  |D f|^2 < \infty \qquad \textrm{for any}\quad \varepsilon>0
\]
and any compact subset $A\subset \C$.

\smallskip
\noindent {\rm (ii)}\quad For radial maps of $p$-integrable distortion with  $p>0$ we have

$$
|Df|^2\log(e+|Df|)^{p-1}\log\log(|Df|+10)^{-(1+\varepsilon)}\in L^1_{loc}\qquad \textrm{for any}\quad \varepsilon>0.
$$
\end{theorem}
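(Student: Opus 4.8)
The plan is to reduce the whole statement to a one–dimensional estimate. A radial homeomorphism is $f(z)=\rho(|z|)\,z/|z|$ with $\rho$ increasing and locally absolutely continuous; after scaling the target we may take $\rho(1)=1$, and by the scaling invariance of the class and of the conclusions it suffices to bound the integrals over the unit disc $\dd$ (an annulus $\{R^{-1}\le|z|\le R\}$ is handled in the same way). At radius $r$ the singular values of $Df$ are $\rho'(r)$ and $\rho(r)/r$, so $K_f=\max\bigl(\tfrac{r\rho'(r)}{\rho(r)},\tfrac{\rho(r)}{r\rho'(r)}\bigr)$. Writing $y=-\log r$ and $\Xi(y):=-\log\rho(e^{-y})$ — a strictly increasing, locally absolutely continuous function with $\Xi(0)=0$, $\Xi(y)\to\infty$ and $\Xi'>0$ a.e. — one obtains the compact identities
\[
K_f(y)=\max\bigl(\Xi'(y),\tfrac1{\Xi'(y)}\bigr),\qquad |Df(y)|=\max(1,\Xi'(y))\,e^{\,y-\Xi(y)},
\]
and for every nonnegative Borel $g$, $\int_{\dd}g(|Df|,K_f)|Df|^2\,dA=2\pi\int_0^\infty g(|Df(y)|,K_f(y))\,m(y)\,dy$ with mass element $m(y):=\max(1,\Xi'(y))^2e^{-2\Xi(y)}=|Df(y)|^2e^{-2y}$, and $m(y)/\Xi'(y)=K_f(y)\,e^{-2\Xi(y)}$. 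The hypothesis $e^{pK_f}\in L^1_{\mathrm{loc}}$ becomes $C_0:=\int_0^\infty e^{\,pK_f(y)-2y}\,dy<\infty$. Thus in (i) ($p=1$) the task is to prove $\int_0^\infty m(y)\log^{-1-\eps}(e+K_f(y))\,dy<\infty$, and in (ii) the same integral with weight $\log^{p-1}(e+|Df(y)|)\,[\log\log(|Df(y)|+10)]^{-1-\eps}$.

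The quantitative heart of the matter is a lower bound for $\Xi$ drawn from the distortion budget: for $y\ge1$,
\[
\Xi(y)\;\ge\;\int_0^y\frac{dt}{K_f(t)}\;\ge\;\frac{p}{2}\log y-C_\ast,\qquad\text{equivalently}\qquad \Xi^{-1}(u)\le C\,e^{2u/p}.
\]
Indeed $\Xi'\ge 1/K_f$ always, while on $[1,y]$ the set $\{t:\,pK_f(t)\ge 2t\}$ has measure at most $C_0$ (there $e^{pK_f(t)-2t}\ge1$) and on its complement $1/K_f(t)>p/(2t)$; integrating $p/(2t)$ over the complement gives the claim. This is precisely what makes the mass element $m(y)$ decay, and the rest of the proof is bookkeeping of this decay against the size of $K_f$ (when $p\ge1$) or of $|Df|$ (when $p<1$).

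For (i) one decomposes $[0,\infty)$ into the blocks $\mathcal K_j=\{y:2^j\le K_f(y)<2^{j+1}\}$ and splits each into $\mathcal K_j^{\mathrm{lo}}=\mathcal K_j\cap[0,2^j/4]$ and $\mathcal K_j^{\mathrm{hi}}=\mathcal K_j\cap(2^j/4,\infty)$. On $\mathcal K_j^{\mathrm{lo}}$ the constraint forces $|\mathcal K_j^{\mathrm{lo}}|\le C_0e^{-2^j/2}$, while $m(y)=|Df(y)|^2e^{-2y}\le(2^{j+1})^2$ there, so after summation this part is harmless. On $\mathcal K_j^{\mathrm{hi}}$ one changes variables to $u=\Xi(y)$: since $m(y)\,dy=(m/\Xi')\,du=K_f\,e^{-2u}\,du\le 2^{j+1}e^{-2u}\,du$ and $u=\Xi(y)\ge\tfrac12\log(2^j/4)-C_\ast$ on the range of integration, one gets $\int_{\mathcal K_j^{\mathrm{hi}}}m(y)\,dy\le 2^{j+1}\cdot\tfrac12\,e^{-2(\frac12\log(2^j/4)-C_\ast)}\lesssim 1$. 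As the weight on $\mathcal K_j$ is $\lesssim(\log(e+2^j))^{-1-\eps}$ and $\sum_{j\ge0}(\log(e+2^j))^{-1-\eps}<\infty$, (i) follows. The two powers $2^{j}$ — one from the size of $K_f$, one from the decay $e^{-2\Xi}$ — cancel only up to a bounded constant, so the logarithmic factor is all that is left to secure convergence; this is why the exponent in (i) is $1+\eps$ and is sharp.

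Statement (ii) follows the same pattern. For $p\ge1$ one keeps the blocks $\mathcal K_j$ and splits $\mathcal K_j^{\mathrm{hi}}$ once more by whether $|Df(y)|\le\exp(2^{\lfloor j/2\rfloor})$ — in which case $m(y)=|Df|^2e^{-2y}$ is so small (as $y>2^j/4\gg 2^{\lfloor j/2\rfloor}$) that the sum over $j$ converges — or not, in which case $\log\log(|Df(y)|+10)\gtrsim j$, so the weight is $\lesssim j^{-1-\eps}$, and the remaining factor $\log^{p-1}(e+|Df(y)|)\lesssim y^{p-1}$ (via $\log|Df(y)|\le\log K_f(y)+(y-\Xi(y))\lesssim y$) is absorbed by the decay $e^{-2u/p}$ in the $u$–integral, using $\Xi^{-1}(u)\le Ce^{2u/p}$. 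For $p<1$ one decomposes instead by the size of $|Df|$, into $\{2^k\le|Df(y)|<2^{k+1}\}$; there $m(y)\asymp 4^ke^{-2y}$, and $|Df|=\max(1,\Xi')e^{y-\Xi}$ together with $\Xi(y)\ge\tfrac p2\log y-C_\ast$ forces the block into $\{y\gtrsim k\}$, whence $\int 4^ke^{-2y}\,dy\lesssim1$, and since the weight there is $\asymp k^{p-1}(\log k)^{-1-\eps}=k^{-(1-p)}(\log k)^{-1-\eps}$ the series converges precisely because $p<1$. The step I expect to be the real obstacle is this balancing in the dyadic sum: each block carries a factor $2^j$ (or $4^k$) from the size of the distortion, matched only up to a bounded factor by the decay of $e^{-2\Xi}$ supplied by exponential integrability, so convergence rests entirely on the slow (double-)logarithmic gain — and in (ii) one must in addition be careful when passing between the $K_f$– and $|Df|$–decompositions, since the weight there is phrased through $|Df|$ whereas the budget naturally controls $K_f$. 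Regions where $\Xi'>1$ need only minor extra care, since there $\Xi'=K_f$ and the relevant ranges of $y$ are again confined by $C_0$.
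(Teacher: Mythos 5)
Your reduction to one dimension is clean and the overall strategy is genuinely different from the paper's. The paper first proves an area distortion estimate (Proposition \ref{pr:area3}, $|f(E)|\lesssim\log^{-p}(1+1/|E|)$), then converts it into Jacobian integrability via the Hardy--Littlewood--P\'olya rearrangement Lemma \ref{le:HLP}, and finally extrapolates in $\varepsilon$ through Lemma \ref{le:apu3}. You instead pass to $y=-\log r$, $\Xi=-\log\rho$ and do everything by a dyadic decomposition in $K_f$ (or $|Df|$), bypassing both the rearrangement machinery and the extrapolation. Both routes hinge on the same quantitative fact: your bound $\Xi(y)\ge \tfrac p2\log y-C_*$ is exactly the paper's \eqref{eq:obtain1}, $\phi(e^{-n})\le Cn^{-p/2}$, and the proof is the same Chebyshev/Jensen-type use of the budget. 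Your more hands-on route is appealing, and part (i) as well as the $p\ge1$ half of part (ii) check out, including the change of variable $u=\Xi(y)$ and the bound $\Xi^{-1}(u)\le Ce^{2u/p}$.

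There is, however, a genuine gap in the $p<1$ case of (ii). You claim $\int_{\mathcal D_k} m\lesssim 1$ and then assert that
\[
\sum_k k^{p-1}(\log k)^{-1-\varepsilon}
\]
converges ``precisely because $p<1$''. It does not: for every $p>0$ the exponent $p-1$ lies in $(-1,\infty)$, so the series diverges (it would converge only for $p\le 0$). The bound $\int_{\mathcal D_k}m\lesssim 1$, obtained from the crude localisation $\mathcal D_k\subset\{y\gtrsim k\}$, is therefore too weak. The fix is within reach of your own estimate but must be carried out: where $\Xi'\le 1$ one has $|Df|=e^{y-\Xi}\ge 2^k$, and feeding this into $\Xi(y)\ge\tfrac p2\log y-C_*$ gives the sharper localisation $y\ge k\log 2+\tfrac p2\log k-C$, whence $\int_{\mathcal D_k}4^ke^{-2y}\,dy\lesssim 4^k\cdot 4^{-k}k^{-p}=k^{-p}$. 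Only this extra $k^{-p}$, multiplied by the weight $\asymp k^{p-1}(\log k)^{-1-\varepsilon}$, yields the convergent series $\sum_k k^{-1}(\log k)^{-1-\varepsilon}$. You should also make explicit the treatment of the set $\{\Xi'>1\}$ in this regime: there $|Df|=K_fe^{y-\Xi}$, and the inequality $|Df|\ge 2^k$ no longer forces $y\gtrsim k$ by itself; one needs either the budget to kill the small-$y$ part (the set $\{K_f$ huge, $y$ small$\}$ has super-exponentially small measure) or a return to the $K_f$-decomposition on that set. As written, the remark that this requires only ``minor extra care'' papers over a step that is not obviously minor and is entangled with the error above.
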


Our next result  considers mapping that in a sense lie in between mappings of exponentially integrable distortion and standard quasiconformal maps.
\begin{theorem}\label{thm:main2}
Assume that $f$ is a planar homeomorphic mapping of finite distortion satisfying the integrability
\begin{equation}\label{eq:fed4}
e^{(K_f(z))^\alpha}\in L^1_{loc}\nonumber
\end{equation} 
for some $\alpha >1$. Then it holds that
\begin{equation}\label{eq:mfd5}
\int_\mathbb{D} |D f|^2\exp\big(\log^\beta(e+|D f|)\big) < \infty
\end{equation}
for any $\beta<1-1/\alpha$. The result is optimal in the sense that the conclusion fails for  $\beta >1-1/\alpha$.
\end{theorem}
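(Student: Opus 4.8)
The plan is to reduce everything to a principal map $f:\C\to\C$ that is conformal outside $\D$, and then combine two ingredients: (a) a good distortion estimate for the modulus of continuity / Jacobian, obtained by integrating the distortion inequality along the lines of the $p=1$ exponential case but keeping track of the stronger $e^{K^\alpha}$ integrability; and (b) a Gehring-type self-improvement / Astala-type area distortion argument adapted to the Orlicz gauge $\Phi(t)=t^2\exp(\log^\beta(e+t))$. Concretely, I would first record the pointwise inequality coming from (iii), namely $|Df|^2\le K_f J_f$, and note that it suffices to prove $\int_\D \Phi(|Df|)<\infty$ for such $f$, since Stoilow factorization and the usual localization trick (as used for Theorems \ref{thm:main1}–\ref{co:3}) transfer it to arbitrary compact subsets of $\Omega$. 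The key analytic input is an exponential self-improvement of integrability for $e^{(K_f)^\alpha}\in L^1_{loc}$: one shows that for such maps the pushforward of area behaves well, more precisely that there are constants so that for every disk $B\subset\D$ one controls $|f(B)|$ in terms of $|B|$ with a logarithmically-corrected exponent governed by the local $L^1$-norm of $e^{(K_f)^\alpha}$.

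The main steps, in order, would be: (1) Reduce to principal maps and to the estimate $\int_\D\Phi(|Df|)<\infty$. (2) Establish a \emph{modulus of continuity} bound: using the Beltrami equation and the integrability $e^{K_f^\alpha}\in L^1$, show that $f$ is locally Hölder-type continuous with a gauge of the form $\omega(r)\sim\exp(-c\log^{1/\alpha}(1/r))$; this is the analog of the $\exp(-c\sqrt{\log(1/r)})$ bound in the exponential case $\alpha=1$, and follows by the standard length–area / Koebe argument integrating $\int K_f^{-1}$-type quantities against the distortion, then optimizing using the inequality $ab\le a^{\alpha}/\alpha + b^{\alpha'}/\alpha'$ with $\alpha'=\alpha/(\alpha-1)$. (3) Upgrade the modulus of continuity to an \emph{area distortion} statement: for a ball $B(x,r)\subset\D$, $|f(B(x,r))|\lesssim \exp(-c\log^{1/\alpha}(1/r))$ up to the right powers; equivalently, letting $E_t=\{z\in\D:\ |Df(z)|>t\}$, derive the decay estimate $|E_t|\lesssim \exp(-c\log^{1/\alpha} t)\cdot(\text{polynomial correction})$ by combining the distribution of $J_f$ (which integrates to $|f(\D)|<\infty$) with the distortion bound $|Df|^2\le K_f J_f$ and Chebyshev on $e^{K_f^\alpha}$. (4) Integrate: $\int_\D\Phi(|Df|)=\int_0^\infty \Phi'(t)|E_t|\,dt$, and check that $\Phi'(t)\sim t\exp(\log^\beta(e+t))$ times $|E_t|\sim \exp(-c\log^{1/\alpha}t)$ is integrable exactly when $\beta<1-1/\alpha=1/\alpha'$, since then $\log^\beta t$ is dominated by $\log^{1/\alpha'} t$ for large $t$ and the remaining $t^{1+o(1)}\cdot t^{-2}$-type factor from rewriting $|E_t|$ through $J_f$ provides convergence. (5) For optimality, construct a radial example $f(z)=z\,\rho(|z|)/|z|$ with $\rho$ chosen so that $K_f(r)\sim(\log(1/r))^{1/\alpha-1}$ near $0$, verify $e^{K_f^\alpha}\in L^1_{loc}$, and compute that $\int_\D\Phi(|Df|)=\infty$ once $\beta>1-1/\alpha$; the radial computation reduces to an explicit one-variable integral.

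The main obstacle I expect is Step (3)–(4): getting the \emph{sharp} power of $\alpha$ in the exponent of the area/distribution decay, rather than just some exponential decay. The naive length–area argument loses constants and, more importantly, can lose in the \emph{exponent} of the logarithm if the optimization between the "conformal" part and the "distortion" part of the energy is not done carefully; one must track how the $L^1$-norm of $e^{K_f^\alpha}$ interacts with the borderline growth, and the correct bookkeeping is exactly the Young-inequality split with conjugate exponents $\alpha,\alpha'$ that produces the threshold $1-1/\alpha$. A secondary technical point is that $\Phi$ is not doubling in the usual sense and is only \emph{almost} submultiplicative, so the passage from modulus-of-continuity to the Orlicz integral via a Whitney/stopping-time decomposition of $\D$ must be done with an Orlicz-space version of the Gehring lemma or directly via the distribution function, avoiding any appeal to reverse Hölder in a fixed $L^q$. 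Once the decay estimate $|E_t|\lesssim t^{-2}\exp\!\big(C\log^{1/\alpha'}t\big)$-type bound is in hand (with the "$+$" sign, i.e.\ it is the small-measure set where $J_f$ is concentrated that is controlled), the integration in Step (4) is routine calculus.
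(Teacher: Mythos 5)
Your overall architecture (principal map $\to$ distribution/area estimate $\to$ higher integrability of $J_f$ $\to$ pass to $|Df|^2 = K_f J_f$ via a Young-type split, plus a radial counterexample) matches the paper's strategy, and you correctly identify $1-1/\alpha=1/\alpha'$ as the threshold in places. But the core of the proposal has a real gap, and there are two smaller errors.

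The main problem is Steps (2)–(3). A length–area/Koebe-type argument yields a modulus of continuity for $f$, hence an area distortion estimate for \emph{balls} only. What is actually needed is an area distortion estimate for \emph{arbitrary measurable} $E\subset\D$ — equivalently, control of $|E_t|=|\{|Df|>t\}|$ — and this cannot be deduced from ball distortion: a level set of the derivative can be scattered all over $\D$. (This is the same phenomenon as in the quasiconformal case, where $|f(B)|\lesssim |B|^{1/K}$ for balls is classical but $|f(E)|\lesssim |E|^{1/K}$ for arbitrary $E$ is Astala's theorem and requires a holomorphic-family/Neumann-series argument.) The naive Chebyshev step you describe — combining $J_f\in L^1$, $|Df|^2\le K_fJ_f$, and $e^{K_f^\alpha}\in L^1$ — optimizes to $|E_t|\lesssim t^{-2}(\log t)^{1/\alpha}$, which is \emph{not} integrable against $\Phi'(t)\sim t\exp(\log^\beta t)$ for any $\beta>0$: you would need genuinely better-than-$L^1$ integrability of $J_f$, and that is exactly what the missing global area distortion estimate supplies. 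The paper obtains it (Proposition~\ref{prop:area}(i)) from a quantitative decay of the Neumann series $(\mu\sss)^n\mu$ (Lemma~\ref{le:Neumann}): split $\D$ into "good" sets $G_n$ where $|\mu|$ is controlled and "bad" sets $B_n$ of exponentially small measure, handle the bad part with Astala's/Eremenko--Hamilton's QC area distortion applied to $f^\lambda$, and sum; the exponent $1-1/\alpha$ appears from estimating $\sum_{j\le n}(j+c)^{-1/\alpha}\gtrsim n^{1-1/\alpha}$, not from a Young-inequality heuristic. Once the area distortion $|f(E)|\lesssim\exp(-c'\log^{1-1/\alpha}(e+1/|E|))$ is known for all measurable $E$, the passage to Jacobian integrability is cleanest via the Hardy--Littlewood--P\'olya rearrangement lemma (the paper's Lemma~\ref{le:HLP}), rather than a Gehring/Orlicz machinery.

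Two further points. First, your exponents are internally inconsistent: in Steps (2)–(3) you write $\log^{1/\alpha}$, in the obstacle paragraph $\log^{1/\alpha'}=\log^{1-1/\alpha}$; these differ unless $\alpha=2$, and the correct one for the area distortion is $1-1/\alpha$. Second, the extremal map in Step (5) is wrong: the radial formula $K_f\sim(\log(1/r))^{1/\alpha-1}$ would force $K_f\to 0$ as $r\to 0$, which is impossible since $K_f\ge 1$. The correct radial extremal (as in the paper) has $\phi(r)\sim\exp\!\big(-\tfrac{2}{1-1/\alpha}\log^{1-1/\alpha}(e+1/r)\big)$, hence $K_f\sim\log^{1/\alpha}(1/r)$ with the \emph{positive} exponent $1/\alpha$, so that $e^{K_f^\alpha}\sim r^{-c}$ is just borderline locally integrable.
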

\noindent Sharpness of the previous theorem is shown by the map
$$
f(z)= c_\alpha \frac{z}{|z|}\exp\big(-{\textstyle\frac{2}{1-1/\alpha}}\log^{1-1/\alpha}(e+1/|z|)\big) 
$$
for $|z|<1$, and identity outside $\D$. We expect that the extremal maps for Theorems   {\rm \ref{thm:main1}} and  {\rm\ref{co:3}} are  also  given by radial maps,
so it is natural to state:
\begin{conjecture}\label{con:1} The conclusions of Theorem {\rm \ref{th:radial}} remain true without assuming that the map is radial.
\end{conjecture}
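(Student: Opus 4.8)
The plan is to upgrade the proofs of Theorems \ref{thm:main1} and \ref{co:3} so that the logarithmic exponents $4+\varepsilon$ and $1+3p+\varepsilon$ are sharpened to the radial values $1+\varepsilon$. The key structural insight should be the following: in the non-radial proof one loses powers of $\log K_f$ (or $\log\log|Df|$) at several independent junctures — typically (a) passing from a pointwise distortion inequality to an integrated one via Hölder or Young's inequality, (b) controlling the Jacobian $J_f$ through a higher-integrability/area-distortion estimate that itself carries a logarithmic defect, and (c) a dyadic summation over annuli where the number of relevant scales contributes an extra $\log$. The radial case sidesteps (b) and (c) entirely because on each circle $|z|=r$ the map is determined by a single scalar ODE, so the Jacobian and the stretching are rigidly linked. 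To prove the conjecture one must recover this rigidity \emph{on average} for a general map.

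\medskip

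\textbf{Step 1.} Reduce to a principal map $f:\C\to\C$ as in the Introduction, and fix the target regularity functional $\Phi(|Df|)=|Df|^2\log^{p-1}(e+|Df|)[\log\log(|Df|+10)]^{-(1+\varepsilon)}$ (taking $p=1$ for Theorem \ref{thm:main1}). Write $J_f=|Df|^2/K_f$ and split $\int_{\D}\Phi(|Df|)$ over the level sets $\{2^{k}\le K_f<2^{k+1}\}$, so it suffices to bound $\sum_k (k\log 2)^{p-1}(\log(k+2))^{-(1+\varepsilon)}\int_{\{K_f\sim 2^k\}}|Df|^2$, and on each such piece $|Df|^2\le 2^{k+1}J_f$, so the crux is a \emph{weighted area estimate} $\int_{\{K_f\sim 2^k\}} J_f \lesssim $ (something summable after multiplying by $2^{k}k^{p-1}(\log k)^{-(1+\varepsilon)}$).

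\medskip

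\textbf{Step 2.} The heart of the matter is a sharp distortion-of-area inequality for sets where $K_f$ is comparable to $2^k$, of the form
\[
  \int_{\{K_f\sim 2^k\}\cap\D} J_f(z)\,dA(z)\ \lesssim\ \exp\!\big(-c\,2^{k}/k\big)\quad\text{(schematically)},
\]
or, more robustly, a logarithmic-weight version that when summed against $2^{k}k^{p-1}(\log k)^{-(1+\varepsilon)}$ converges. I would obtain this by the holomorphic-deformation / thermodynamic-formalism method (cf.\ the Astala area-distortion circle of ideas used for \eqref{eq:qc1} in \cite{AIPS1}): embed $f$ in a one-parameter family $f_\lambda$ by scaling the Beltrami coefficient, use the exponential integrability \eqref{eq:fed} to control the analytic dependence, and optimize the holomorphic-motion parameter $\lambda$ pointwise in $k$. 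The gain over the non-radial Theorems \ref{thm:main1}/\ref{co:3} should come from using the \emph{full} strength of the $L^1$ bound on $e^{pK_f}$ rather than a Chebyshev truncation of it, i.e.\ choosing $\lambda=\lambda(k)$ essentially optimally so that no extra polynomial-in-$k$ slack is introduced.

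\medskip

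\textbf{Step 3.} Assemble: plug the Step 2 bound into the dyadic sum of Step 1; the weight $[\log\log]^{-(1+\varepsilon)}$ translates (after the substitution $|Df|\leftrightarrow K_f$ on the relevant level sets, where $\log|Df|\sim \log J_f + \log K_f$ and on the dominant range $K_f$ controls the size) into the factor $(\log k)^{-(1+\varepsilon)}$, which is exactly summable, matching the radial exponent.

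\medskip

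\textbf{Main obstacle.} The genuine difficulty — and the reason this is stated as a conjecture rather than a theorem — is Step 2: a general mapping of finite distortion has no pointwise control relating $J_f$ at a point to the distortion at that point beyond $|Df|^2=K_fJ_f$, and the known higher-integrability machinery (Gehring-type self-improvement, Astala-type holomorphic motions) seems to leak at least one extra power of $\log$ compared to what the one-dimensional radial ODE gives for free. Closing that gap would require either a sharper holomorphic-motion estimate with an optimized parameter that is uniform over the level sets $\{K_f\sim 2^k\}$, or a new a~priori inequality capturing, on average over circles, the radial rigidity between stretching and Jacobian. I do not see how to do this with present techniques; proving the conjecture likely hinges on such an improvement.
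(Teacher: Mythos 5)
The statement you were asked to prove is labeled a \emph{conjecture} in the paper; the authors give no proof of it, and you correctly recognize this: your ``proposal'' is a strategy sketch that, by your own admission in the final paragraph, does not close the gap. So there is no internal proof to compare against, and the honest verdict is that neither you nor the paper proves Conjecture~\ref{con:1}.

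That said, your diagnosis of \emph{where} the difficulty lies is essentially accurate, even though your proposed route differs from the paper's machinery. The paper does not use a dyadic decomposition over level sets $\{K_f\sim 2^k\}$; instead it embeds $f$ in an analytic family $f_w$ via $\nu_w=\frac{\mu}{|\mu|}|\mu|^{w+\varepsilon}$ and interpolates $L^2$ bounds for $g_w=(1-|\mu|)^{(1-w)/2}\partial f_w$ across the strip $0\le\Re w\le 1$ by Hadamard's three-lines theorem (see \eqref{eq:apu1}). The exponent $4+\varepsilon$ in Theorem~\ref{thm:main1} traces back to $M_1\lesssim\varepsilon^{-2}$ in Lemma~\ref{le:kaksi}, which in turn comes from the area-distortion bound of Proposition~\ref{prop:area}(ii) for general maps, whose constant carries a factor $\delta^{-3\beta}$. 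The radial Theorem~\ref{th:radial} improves this precisely because Proposition~\ref{pr:area3} gives the clean bound $|f(E)|\le C\log^{-p}(1+1/|E|)$ with \emph{no} dependence on an auxiliary parameter $\delta$, which after Lemma~\ref{le:HLP} yields $\varepsilon^{-1}$ rather than $\varepsilon^{-4}$. So the bottleneck is not in passing from a Jacobian bound to a derivative bound, nor in a dyadic summation, but in establishing, for general (non-radial) maps of exponentially integrable distortion, an area-distortion estimate with constants as good as the radial one. You identify this as the ``main obstacle'' in your Step~2, which is the right place to point, but your heuristic attribution of the log losses to Hölder/Young steps and a dyadic-annulus summation does not match the actual source of the loss in the paper's argument (the $\delta^{-3\beta}$ in the constant of Proposition~\ref{prop:area}(ii), which originates in the Neumann-series decay analysis of Lemma~\ref{le:Neumann} and David's factorisation trick).

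In short: your sketch is not a proof, and you say so; the conjecture remains open. Your Step~2 correctly isolates the sharp area-distortion inequality as the missing ingredient, but the surrounding framework you propose (level-set decomposition plus holomorphic motions optimized pointwise in $k$) is a different scaffold from the paper's interpolation scheme, and without a genuinely improved Neumann-series or area-distortion estimate neither scaffold closes the gap to the radial exponent.
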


Theorem \ref{th:radial} is sharp up to the possible borderline case. For any $0 < \varepsilon < 1$, we can choose $g_\varepsilon : \C \to \C$ to be
\[
g_\varepsilon(z) := \frac{z}{|z|} \left[ \log \left( e + \frac{1}{|z|} \right) \right]^{-p/2} \left[ \log \log \left( e + \frac{1}{|z|} \right) \right]^{- \varepsilon / 2} \qquad \textrm{for}\quad |z| < 1 
\]
and $g_{\varepsilon} (z) := cz$ elsewhere for some constant $c$. Then one directly verifies that $g_\varepsilon$ is a (radial) mapping of finite distortion satisfying \eqref{eq:fed} with $p$ but we have for general $p$
\[
\int_\D |D(g_\varepsilon)|^2\log(e+|D(g_{\varepsilon})|)^{p-1}\log\log(|D(g_\varepsilon)|+10)^{-1+\varepsilon} = \infty
\]
and for $p=1$ we have 
\[
\int _\D \frac{1}{\log^{1-\varepsilon}\left(e+K_{g_\varepsilon}\right)}  |D(g_\varepsilon)|^2 = \infty.
\]

Section \ref{se:mainproof} below contains the proof of Theorems \ref{thm:main1} and \ref{co:3} assuming the quantitative estimate of Lemma \ref{le:kaksi}.  Next, Section \ref{se:neumann} gives careful quantitative estimates for the decay of the Neumann series associated with the Beltrami equation. Then in Section \ref{sec:proofoflemma2} we are ready to accomplish the proof of Lemma \ref{le:kaksi}, and also to complete the proof of Theorem \ref{thm:main2}. Finally,  Section \ref{se:radialproof} treats the case of radial mappings, i.e. Theorem \ref{th:radial}.

\bigskip

\noindent{\bf Acknowledgements:}\quad We are grateful for the anonymous referees for careful reading of the paper and their thoughtful comments that led to substantial improvements in the presentation of the paper.

\section{Proof of  Theorems \ref{thm:main1} and  \ref{co:3}}\label{se:mainproof}
In this section we prove  Theorem \ref{thm:main1} as well as Theorem \ref{co:3} assuming Lemma \ref{le:kaksi} whose proof we provide later. It is useful to note the general comparison for mappings of finite distortion $|\partial f| \leq |Df| \leq 2 |\partial f |$.

\begin{proof}[Proof of Theorem \ref{thm:main1}]
Our basic assumption is  that $f$ is  a principal mapping of finite distortion with 
\begin{equation}\label{eq:-1}
\int_\D e^{K_f}\leq \widetilde C <\infty,
\end{equation}
and we denote by $\mu$ the Beltrami coefficient of $f$.
However, we first consider the class   of quasiconformal $f$ that satisfy \eqref{eq:-1} with a fixed $\widetilde C$. After obtaining uniform estimates for this class, we then at the end of the proof use approximation to deduce results for maps of finite distortion.

We next fix $0<\varepsilon < 1/2$ and  for any $w$ with $0 \leq \Re w \leq 1$ we let $f_w$ be the unique principal solution to the Beltrami equation
\[
\overline{\partial}f_w(z) = \nu_w(z) \partial f_w (z),
\]
where 
\[
\nu_w(z) := \frac{\mu(z)}{|\mu(z)|}|\mu(z)|^{w+\varepsilon}.
\]
A main idea in the proof is to consider the functions
$$
g_w = (1-|\mu|)^{(1-w)/2}\partial f_w
$$
 and apply the analytic interpolation theorem, or actually a very special case of it that reduces to a vector-valued Phragm\'en-Lindel\"of type maximum principle.  

To accomplish this, note that since the dependence $w\mapsto \nu_w$ is analytic, we deduce by the Ahlfors-Bers theorem that the dependence $w\mapsto f_w$ (as an $L^2(\D)$-valued function) is analytic over the closed strip, and hence also $g$ depends analytically on $w$. Especially,  the map $w \mapsto g_w$ is continuous in the strip and analytic in the interior. Moreover, by a standard application of the Neumann series and the definition of $g$ we see that $\|g_w\|_{L^2(\D)} \leq C(\widetilde C)$  for all $w$ in the closed strip $\{0 \leq \Re w \leq 1\}$. Fix $h \in C_0^\infty(\D)$ with $\|h\|_{L^2(\D)} = 1$. A fortiori, the function $w \mapsto \int_{\D} g_w(z)h(z) \, dm(z)$ is a continuous and bounded analytic function  in the closed strip and analytic in the interior. If we denote 
\[
\widetilde M_r := \sup_{\Re w = r} \left|\int_{\D} g_w(z)h(z) \, dm(z) \right|,
\]
and 
\[
 M_r := \sup_{\Re w = r} \|g_w\|_{L^2(\D)},
\]
then we have by a classical version of the Hadamard's three lines theorem that
\begin{equation}\label{eq:apu0}
\widetilde M_\theta \leq \widetilde M_0^{1-\theta} \widetilde M_1^\theta \leq   M_0^{1-\theta} M_1^\theta\nonumber .
\end{equation} 
Since $h \in C_0^\infty(\D)$ is arbitrary we in fact have for any $\theta\in (0,1)$
\begin{equation}\label{eq:apu1}
 M_\theta \leq  M_0^{1-\theta} M_1^\theta.
\end{equation} 

In order to continue the proof we need several auxiliary results.
\begin{lemma}\label{le:yksi}
For any $w$ with $\Re w = 0$ we have
$
\displaystyle \int_{\D} |g_w|^2 \leq {C}{\varepsilon}^{-1}
$
with a universal constant $C$. In particular, $M_0 \leq {C_0}{\varepsilon^{-1/2}}$.
\end{lemma}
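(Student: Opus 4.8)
The plan is to argue directly on the line $\Re w = 0$, without invoking interpolation (that is only needed for the steps after this lemma). Fix $w$ with $\Re w = 0$. Since $|\mu(z)|\in[0,1)$ is real, the factor $(1-|\mu|)^{(1-w)/2}$ has modulus $(1-|\mu|)^{1/2}$, while $\nu_w=\tfrac{\mu}{|\mu|}|\mu|^{w+\varepsilon}$ has modulus $|\mu|^{\varepsilon}$; hence
$$
|g_w(z)|^2 = (1-|\mu(z)|)\,|\partial f_w(z)|^2 \qquad \text{for a.e. } z .
$$
On the other hand, from $\overline{\partial} f_w = \nu_w\,\partial f_w$ we get $|\overline{\partial} f_w| = |\mu|^{\varepsilon}|\partial f_w|$, so the Jacobian of $f_w$ is $J_{f_w} = (1-|\mu|^{2\varepsilon})|\partial f_w|^2$. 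Thus the whole estimate reduces to comparing the two weights $1-|\mu|$ and $1-|\mu|^{2\varepsilon}$.

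The elementary inequality I would use is that $1 - t \le 1 - t^2 \le \varepsilon^{-1}\bigl(1 - t^{2\varepsilon}\bigr)$ for all $t\in[0,1]$ and $0<\varepsilon<1$; the second bound is just the statement that the concave function $u\mapsto u^{\varepsilon}$ lies below its tangent line $u\mapsto 1-\varepsilon+\varepsilon u$ at $u=1$ (put $u=t^2$). Applying this pointwise with $t=|\mu(z)|$ and integrating over $\D$ gives
$$
\int_{\D}|g_w|^2 = \int_{\D}(1-|\mu|)\,|\partial f_w|^2 \le \frac{1}{\varepsilon}\int_{\D}(1-|\mu|^{2\varepsilon})\,|\partial f_w|^2 = \frac{1}{\varepsilon}\int_{\D} J_{f_w}.
$$

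It then remains to bound $\int_{\D} J_{f_w}$ by an absolute constant. Since $\mu$, and hence $\nu_w$, is supported in $\D$ with $\|\nu_w\|_{\infty}\le\|\mu\|_{\infty}^{\varepsilon}<1$ in the quasiconformal class we are reduced to, the map $f_w$ is a principal quasiconformal homeomorphism; the change of variables inequality for Sobolev homeomorphisms with nonnegative Jacobian gives $\int_{\D} J_{f_w}\le|f_w(\D)|$, and because $f_w$ is conformal in $\C\setminus\overline{\D}$ with $f_w(z)=z+O(1/z)$ near infinity, the classical area theorem forces $|f_w(\D)|\le\pi$. Combining the three displays yields $\int_{\D}|g_w|^2\le\pi/\varepsilon$ for every $w$ on the imaginary axis, which is the first assertion with the universal constant $C=\pi$; taking a square root and the supremum over $\Re w=0$ gives $M_0\le\sqrt{\pi}\,\varepsilon^{-1/2}$. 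I do not anticipate a genuine obstacle: the load‑bearing points are merely the correct bookkeeping of the exponents of $|\mu|$ on the line $\Re w=0$ (where the specific normalizations of $g_w$ and $\nu_w$ are used) together with the tangent‑line concavity inequality, while the one thing needing a moment's care is verifying that $f_w$ really is principal and quasiconformal so that the area theorem applies — which is immediate in the reduced setting.
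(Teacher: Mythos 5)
Your proof is correct and follows essentially the same route as the paper's: both reduce $\int_{\D}|g_w|^2$ to $\varepsilon^{-1}\int_{\D}J_{f_w}$ by a tangent-line/concavity estimate on $t\mapsto t^{2\varepsilon}$ and then use the area theorem for principal maps to bound $\int_{\D}J_{f_w}\le|f_w(\D)|\le\pi$. The paper applies the tangent line directly to $x\mapsto x^{2\varepsilon}$ at $x=1$ (obtaining $\pi/(2\varepsilon)$), whereas your detour through $1-t\le 1-t^2$ costs an immaterial factor of $2$.
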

\begin{proof}
As $f_w$ is a quasiconformal principal mapping, we obtain by the Bieberbach area formula
\[
\int_{\D} J(z,f_w) = |f(\D)| \leq \pi.
\]
To use this, note first that as $J(z,h) = |\partial h|^2 - |\overline{\partial}h|^2$, we have by the definition of $g$ for any $w$ with $\Re w=0$
\[
\int_{\D} |g_w|^2 = \int_{\D} (1-|\mu|)|\partial f_w|^2 = \int_{\D} \frac{1-|\mu|}{1-|\nu_w|^2} J(z,f_w)
\leq \int_{\D} \frac{1-|\mu|}{1-|\mu|^{2\varepsilon}} J(z,f_w).
\]
As $x \mapsto x^{2\varepsilon}$ is a concave function whose derivative at $x=1$ equals $2\varepsilon$, we have $x^{2\varepsilon} \leq 1 + 2\varepsilon(x-1)$ for all $x>0$. This implies that 
\[
\frac{1-|\mu|}{1-|\mu|^{2\varepsilon}} \leq \frac{1-|\mu|}{2\varepsilon(1-|\mu|)} = \frac{1}{2\varepsilon},
\]
finishing the proof.
\end{proof}
\begin{lemma}\label{le:kaksi}  
For any $w$ with $\Re w = 1$ it holds that
$
\displaystyle \int_{\D} |g_w|^2 \leq {C}{\varepsilon}^{-4}.
$
The constant $C$ depends only on $\widetilde C$ in \eqref{eq:-1} . In particular, $M_1 \leq {C_0}{\varepsilon^{-2}}$.
\end{lemma}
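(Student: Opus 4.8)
The plan is to estimate $\|g_w\|_{L^2(\D)}$ for $\Re w=1$ by analysing the Neumann series for $\partial f_w$ and exploiting the extra smallness gained from the factor $(1-|\mu|)^{(1-w)/2}$, which on the line $\Re w=1$ has modulus $1$ but whose presence in the definition of $g_w$ must be tracked through the telescoping. On $\Re w=1$ we have $|\nu_w|=|\mu|^{1+\varepsilon}$, so $|\nu_w|$ is \emph{strictly smaller} than $|\mu|$, and in particular $\|\nu_w\|_\infty<1$ with a quantitative gap controlled by the exponential integrability \eqref{eq:-1}. The first step is therefore to write $\partial f_w = \sum_{k\geq 0} (\nu_w S)^k\nu_w (\text{const}) + 1$ (principal solution Neumann series, $S$ the Beurling transform), and to recall from Section \ref{se:neumann} the quantitative decay estimates for the tails of this series in terms of $\|\nu_w\|_\infty$ and the distribution function of $K_f$.

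Second, I would convert the bound $\|g_w\|_{L^2}^2=\int_\D (1-|\mu|)|\partial f_w|^2$ into something governed by $|\nu_w|$ rather than $|\mu|$. The key algebraic point is that $1-|\mu|\leq C\log(e+K_f)\,(1-|\nu_w|)$ on the relevant range: indeed $1-|\mu|\asymp 1/K_f$ while $1-|\nu_w|=1-|\mu|^{1+\varepsilon}\geq \varepsilon(1-|\mu|)$ pointwise (concavity, as in Lemma \ref{le:yksi}) but we want a bound with a better $\varepsilon$-dependence, so instead one splits $\D$ into the region where $|\mu|$ is bounded away from $1$ (where everything is trivially controlled) and the region where $|\mu|$ is close to $1$; on the latter region one uses $1-|\mu|^{1+\varepsilon}\geq (1+\varepsilon)(1-|\mu|)|\mu|^\varepsilon$ and the elementary inequality relating this to $\varepsilon\log(1/(1-|\mu|))\cdot(1-|\mu|)$ only in the borderline layers, summing geometrically over dyadic layers $1-|\mu|\in[2^{-j-1},2^{-j}]$. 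Combined with the exponential integrability $\int_\D e^{K_f}\leq\widetilde C$, which forces $|\{K_f>t\}|\lesssim e^{-t}$, the sum over layers converges and produces the power $\varepsilon^{-4}$: one power of $\varepsilon^{-1}$ from the interpolation structure already present (mirroring Lemma \ref{le:yksi}), and the remaining $\varepsilon^{-3}$ from the Neumann-series tail bound where each of the finitely many "slow" terms contributes a factor comparable to $\varepsilon^{-1}$ after optimising the split point.

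Concretely the steps are: (1) fix $\Re w=1$, set $\nu=\nu_w$ and note $|\nu|=|\mu|^{1+\varepsilon}$; (2) apply the Neumann-series estimate of Section \ref{se:neumann} to bound $\int_\D |\partial f_w|^2$ in terms of $\|\nu\|_{L^p}$-type norms for a range of $p$, controlled via \eqref{eq:-1}; (3) insert the weight $(1-|\mu|)$ and perform the dyadic decomposition of $\D$ according to the size of $1-|\mu|$; (4) on each layer bound $(1-|\mu|)|\partial f_w|^2$ using the layerwise Neumann bound together with the measure estimate $|\{1-|\mu|\leq 2^{-j}\}|=|\{K_f\gtrsim 2^j\}|\lesssim 2^{-c2^j}$; (5) sum the geometric series in $j$, optimising any free truncation parameter in $\varepsilon$, to arrive at $C(\widetilde C)\varepsilon^{-4}$. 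The main obstacle is step (2)--(3): one must feed the weight $(1-|\mu|)$ \emph{into} the Neumann-series estimate rather than applying it afterwards, because a crude Cauchy--Schwarz separating the weight from $|\partial f_w|^2$ loses too much and gives a worse power of $\varepsilon$; the correct approach is to carry the weight through the telescoping of the series term by term, exactly as the definition $g_w=(1-|\mu|)^{(1-w)/2}\partial f_w$ is designed to allow, so that the analyticity in $w$ and the pointwise bound on $|\nu_w|$ interact favourably. Everything else is a bookkeeping of the geometric sums that the exponential integrability makes summable.
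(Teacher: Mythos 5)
Your proposal rests on a mistaken identity for $g_w$ on the line $\Re w=1$. You write that
\[
\|g_w\|_{L^2}^2 = \int_{\D}(1-|\mu|)\,|\partial f_w|^2,
\]
but this is the formula valid on $\Re w = 0$. For $\Re w = 1$, say $w = 1+it$, one has $(1-w)/2 = -it/2$, so the weight $(1-|\mu|)^{(1-w)/2}=(1-|\mu|)^{-it/2}$ is a unimodular phase; hence $|g_w|=|\partial f_w|$ pointwise and
\[
\|g_w\|_{L^2}^2 = \int_{\D}|\partial f_w|^2.
\]
Because $1-|\mu|\leq 1$, the quantity you propose to bound is strictly smaller than what the lemma requires, so even a successful execution of your dyadic-decomposition scheme would only prove a weaker statement. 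More to the point, the entire strategy of ``feeding the weight $(1-|\mu|)$ into the Neumann series'' and summing over dyadic layers of $1-|\mu|$ is moot: on $\Re w=1$ there is no weight present at all. (The whole point of the complex exponent in $(1-|\mu|)^{(1-w)/2}$ is that the weight acts only on $\Re w=0$, which is precisely where Lemma \ref{le:yksi} exploits it; the price is that on $\Re w=1$ one is left with an \emph{unweighted} $L^2$ bound on $\partial f_w$, and the only gain is that $|\nu_w|=|\mu|^{1+\varepsilon}$ is slightly smaller, equivalently $K_{f_w}\lesssim K_f/(1+\varepsilon/2)+O(1)$.)

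The genuine difficulty, which your proposal does not confront, is exactly to show that $\int_\D|\partial f_w|^2\lesssim \varepsilon^{-4}$ when $K_{f_w}$ enjoys exponential integrability with an exponent only slightly larger than $1$. This is a \emph{quantified borderline} case of the optimal regularity theorem of \cite{AGRS}: one knows $Df\in L^2_{loc}$ for $p>1$, and the task is to track precisely how the constant blows up as $p\downarrow 1$. The paper does this by proving a constant-explicit area-distortion estimate (Proposition \ref{prop:area}(ii), with the crucial factor $\delta^{-3\beta}$ where $\delta=p-\beta$), converting it into a Jacobian estimate $\int_\D J_f\log(e+J_f)\lesssim \varepsilon^{-4}$ via the Hardy--Littlewood--P\'olya rearrangement inequality (Lemma \ref{le:HLP}), and then passing from $J_f$ to $|Df|^2$ via the pointwise inequality $xy\leq x\log(e+x)+e^{(1+\varepsilon)y}$ and the elementary distortion bound $K(z,f_w)\leq K(z,f)/(1+\varepsilon/2)+4$. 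Your proposal does gesture at the Neumann-series tail estimates of Section \ref{se:neumann}, which are indeed an ingredient of the area-distortion bound, but the area-distortion $\to$ rearrangement $\to$ $J\log J$ $\to$ $|Df|^2$ chain that actually produces the $\varepsilon^{-4}$ is absent, and your heuristic accounting for the powers of $\varepsilon^{-1}$ (one from ``interpolation structure,'' three from ``slow Neumann terms'') does not correspond to any step of a correct argument.

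Finally, a smaller inaccuracy: you assert that exponential integrability of $K_f$ forces $\|\nu_w\|_\infty<1$ with a quantitative gap. It does not; $|\mu|$ may approach $1$ arbitrarily closely on a set of positive measure, and so may $|\mu|^{1+\varepsilon}$. The paper handles this by first working with quasiconformal approximants and proving bounds uniform in $\|\mu\|_\infty$, not by extracting a gap in $\|\nu_w\|_\infty$.
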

\noindent We postpone  the proof of this lemma to Section \ref{sec:proofoflemma2} as it needs  more preparation, especially one needs to carefully check the dependence of constants in certain arguments of \cite{AGRS}.

In order to continue the proof we choose $\theta = 1- \varepsilon$ in \eqref{eq:apu1} and note that $f_{1-\varepsilon}=f $ in order to obtain 
\begin{equation}\label{eq:jep}
\int_{\D} (1-|\mu(z)|)^{\varepsilon} |\partial f |^2 \, dm(z)  \;=\; \int_{\D} (1-|\mu(z)|)^{\varepsilon} |\partial f_{1-\varepsilon}(z)|^2 \, dm(z)\; \leq \;\frac{C}{\varepsilon^4}.
\end{equation}

Up to now we have considered the case where  $f$ is quasiconformal and satisfies  \eqref{eq:-1}. We then choose a sequence of quasiconformal maps that converge to $f$ locally uniformly and satisfy the condition \eqref{eq:-1}. In order to find such a sequence  one may e.g. use the factorization (see \cite[Corollary 4.4]{AGRS}) 
$f=g\circ h$, where $g$ is  (say) 5-quasiconformal and $\int_\D e^{5K_h}\leq \widetilde C+\pi e^5$. In this situation one may approximate $h$ by quasiconformal maps $h_n$ by truncating its dilatation in a standard way and one defines $f_n:=g\circ h_n$, and then the sequence $f_n$ satisfies \eqref{eq:-1} with possibly slightly increased $\widetilde C$, but uniformly in $n$. That we have the convergence $h_n\to h$ locally uniformly is deduced by the fact that in this regime the Neumann-series of $h_n$ converges in $L^2$ with uniform bounds for $k$:th term, and clearly we have convergence in $L^2$ for each individual term of the Neumann-series.  Thus $Dh_n\to Dh$ in $L^2_{loc}$, which implies local convergence in $VMO$ for the maps $h_n$, and  the uniform convergence then follows by the known uniform modulus of continuity estimates for mappings of exponentially integrable distortion.

Since $f_n\to f$ locally uniformly, we obtain that $f_n\to f$ in the sense of distributions, and hence $\partial f_n\to \partial f$ in the sense of distributions.  Let us then fix $p<2$. We  have by \eqref{eq:fed2} that $\int_{B} |\partial f_n|^p\leq C$ uniformly in $n$ for any fixed ball, and the same inequality holds also for $f$ instead of $f_n$. This verifies (by using the density of test functions in $L^q$) that the convergence  in distributions upgrades to weak convergence $\partial f_n\overset{w}{\to} \partial f$ in $L^p(\D).$ This immediately implies the weak convergence in $L^p(\D)$ of $(1-|\mu(z)|)^{\varepsilon/p}\partial f_n$ to $(1-|\mu(z)|)^{\varepsilon/p}\partial f$, and we obtain by the basic properties of weak $L^p$-convergence and the uniform estimate \eqref{eq:jep} that
\begin{eqnarray*} 
&&\int_{\D} (1-|\mu(z)|)^{\varepsilon} |\partial f |^p \, dm(z) \\ &\leq& \liminf_{n\to\infty}\int_{\D} (1-|\mu(z)|)^{\varepsilon} |\partial f_n |^p \, dm(z)
\; \leq\;  \liminf_{n\to\infty}\int_{\D} (1-|\mu(z)|)^{\varepsilon} |\partial f_n |^2 \, dm(z) \; +\pi \\
&\leq&\frac{C'}{\varepsilon^4}.
\end{eqnarray*}
By letting $p\nearrow 2$ we finally obtain for the general $f$ the desired inequality
\begin{equation}\label{eq:apu5}
\int_{\D} (1-|\mu(z)|)^{\varepsilon} |\partial f|^2 \leq \frac{C'}{\varepsilon^4},
\end{equation}
and again, the  constant $C'$ in \eqref{eq:apu5} does not depend on $\varepsilon$. 

The inequality \eqref{eq:apu5} already provides a non-trivial localization result because we may consider small values of $\varepsilon$. However, as we have all values $\varepsilon\in(0,1/2)$ at our disposal, the result can be improved on by invoking the following observation:
\begin{lemma}\label{le:apu3}
Let $h$ and $W$ be non-negative functions on $\D$, with $W(z) \leq 1$ for all $z$. Let also  $\varepsilon_0\in (0,1/2)$, $\alpha,C>0$ be positive constants. Assume that for  any $0<\varepsilon<\varepsilon_0$ we have
\begin{equation}\label{eq:apu10}
\int_{\D} (W(z))^{\varepsilon} h(z) \, dm(z) \leq \frac{C}{\varepsilon^\alpha}.
\end{equation}
Then there is a constant $C_1=C_1(\varepsilon_0, \alpha, C)$ such that for $0<\eta\leq 1$
\[
\int_{\D} \frac{1}{\left(\log\left(e + \frac{1}{W(z)}\right)\right)^{\alpha+\eta}} h(z) \, dm(z) \leq \frac{C_1}{\eta}.
\]
\end{lemma}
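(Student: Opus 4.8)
The plan is to \emph{integrate the hypothesis \eqref{eq:apu10} against a power weight in $\varepsilon$} and thereby reduce everything to an elementary pointwise estimate. I would use the weight $\varepsilon^{\alpha+\eta-1}$, the point being that after a rescaling $u=\varepsilon\log(e+1/W)$ this exponent is exactly the one that reproduces the power $\alpha+\eta$ in the claimed conclusion, while the leftover factor $\varepsilon^{(\alpha+\eta-1)-\alpha}=\varepsilon^{\eta-1}$ integrates over $(0,\varepsilon_0)$ to something of size $1/\eta$. So the first step is: multiply \eqref{eq:apu10} by $\varepsilon^{\alpha+\eta-1}$, integrate in $\varepsilon$ over $(0,\varepsilon_0)$, and use Tonelli's theorem (legitimate since every integrand is non-negative) to exchange the $\varepsilon$-integral with the integral over $\D$. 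This turns the problem into proving the pointwise lower bound
\[
\int_0^{\varepsilon_0}\varepsilon^{\alpha+\eta-1}\,W^{\varepsilon}\,d\varepsilon\;\ge\;\frac{c_1}{\bigl(\log(e+1/W)\bigr)^{\alpha+\eta}}
\]
for each value $W=W(z)\in[0,1]$, with a constant $c_1>0$ that must be kept \emph{independent of $\eta\in(0,1]$} (and of $W$).

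For the pointwise bound I would set $t:=\log(e+1/W)$ and note $t\ge\log(e+1)>1$ since $W\le 1$. From $1/W\le e+1/W$ one gets $\log(1/W)\le t$, hence $W^{\varepsilon}=e^{-\varepsilon\log(1/W)}\ge e^{-\varepsilon t}$; substituting $u=\varepsilon t$ then gives
\[
\int_0^{\varepsilon_0}\varepsilon^{\alpha+\eta-1}W^{\varepsilon}\,d\varepsilon\;\ge\;\int_0^{\varepsilon_0}\varepsilon^{\alpha+\eta-1}e^{-\varepsilon t}\,d\varepsilon\;=\;t^{-(\alpha+\eta)}\int_0^{\varepsilon_0 t}u^{\alpha+\eta-1}e^{-u}\,du .
\]
Since $t\ge\log(e+1)$, the range of the last integral contains the fixed interval $(0,\delta_0)$ with $\delta_0:=\min\{1,\varepsilon_0\log(e+1)\}$, and on $(0,\delta_0)\subseteq(0,1]$ we have $u^{\alpha+\eta-1}\ge u^{\alpha}$ because $\eta-1\le 0$; hence that integral is $\ge\int_0^{\delta_0}u^{\alpha}e^{-u}\,du=:c_1>0$, a number depending only on $\alpha$ and $\varepsilon_0$. (If $W(z)=0$ both sides of the pointwise inequality are $0$, so that case is vacuous.)

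The final step is to assemble the pieces: by the pointwise bound, Tonelli, and \eqref{eq:apu10} (which gives $\int_{\D}W^{\varepsilon}h\le C\varepsilon^{-\alpha}$),
\[
c_1\int_{\D}\frac{h(z)}{\bigl(\log(e+1/W(z))\bigr)^{\alpha+\eta}}\,dm(z)\;\le\;\int_0^{\varepsilon_0}\varepsilon^{\alpha+\eta-1}\Bigl(\int_{\D}W^{\varepsilon}h\Bigr)d\varepsilon\;\le\;C\int_0^{\varepsilon_0}\varepsilon^{\eta-1}\,d\varepsilon\;=\;\frac{C\,\varepsilon_0^{\eta}}{\eta}\;\le\;\frac{C}{\eta},
\]
using $\varepsilon_0<1$, so the lemma follows with $C_1:=C/c_1$, which depends only on $\varepsilon_0,\alpha,C$. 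The only place needing genuine attention is the uniformity of $c_1$ in $\eta$ in the pointwise step; the reduction via Tonelli and the bookkeeping of the power of $\varepsilon$ are completely routine, and I do not expect any real obstacle.
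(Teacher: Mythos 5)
Your proof is correct and follows the same overall strategy as the paper: multiply the hypothesis by $\varepsilon^{\alpha+\eta-1}$, integrate over $(0,\varepsilon_0)$, apply Fubini/Tonelli, and reduce to a pointwise lower bound on $\int_0^{\varepsilon_0}\varepsilon^{\alpha+\eta-1}W^\varepsilon\,d\varepsilon$ in terms of $(\log(e+1/W))^{-(\alpha+\eta)}$. The only substantive difference is how the pointwise lower bound is established. The paper first normalizes to $W\leq 1/2$, substitutes $s=\varepsilon\log(1/a)$, and then argues that the ratio $\phi(a,\eta)$ of $\bigl(\log(e+1/a)\bigr)^{\alpha+\eta}$ to $\bigl(\log(1/a)\bigr)^{\alpha+\eta}$ times the partial Gamma integral extends continuously and positively to the compact set $[0,1/2]\times[0,1]$, hence is bounded below; this is a soft compactness argument that does not produce an explicit constant. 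You instead observe directly that $W^\varepsilon\geq e^{-\varepsilon t}$ with $t=\log(e+1/W)\geq\log(e+1)>1$ (no normalization needed), substitute $u=\varepsilon t$, restrict to the fixed interval $(0,\delta_0)\subseteq(0,1]$, and use $u^{\eta-1}\geq 1$ there to obtain the explicit, $\eta$-uniform constant $c_1=\int_0^{\delta_0}u^\alpha e^{-u}\,du$. This is more elementary and more quantitative, at the cost of nothing; it is a small but genuine simplification of the paper's argument.
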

\begin{proof}
The  assumption remains valid if $W$ is replaced by $\min(W, 1/2)$ and the conclusion obtained in this case yields the original one, in view of the assumption. We may hence assume that $W(z)\in [0,1/2]$ for all $z$. From \eqref{eq:apu10} it immediately follows that if $0< \eta \leq 1$, then
\[
\int_0^{\varepsilon_0} \varepsilon^{\alpha-1+\eta} \int_{\D} (W(z))^{\varepsilon}h(z) \, dm(z)\, d\varepsilon \leq C \int_0^{\varepsilon_0}  \varepsilon^{\eta-1} \, d\varepsilon = \frac{C}{\eta} \varepsilon_0^{\eta}.
\]
On the other hand, we can use Fubini's theorem to conclude that
\[
\int_0^{\varepsilon_0}  \varepsilon^{\alpha-1+\eta} \int_{\D} (W(z))^{\varepsilon}h(z) \, dm(z)\, d\varepsilon = \int_{\D} h(z) \int_0^{\varepsilon_0}  \varepsilon^{\alpha-1+\eta} (W(z))^{\varepsilon} d\varepsilon \, dm(z).
\]
For those $z$ with $W(z)=0$, the inner integral is $0$. Let now $0<a:=W(z)\leq 1/2$.  Then the inner integral is equal to
\begin{eqnarray*}
&&\int_0^{\varepsilon_0}  x^{\alpha+\eta-1} a^{x} \, dx = \frac{1}{\left(\log\left(\frac{1}{a}\right)\right)^{\alpha+\eta-1}} \int_0^{\varepsilon_0} \left(\log\left(\frac{1}{a}\right)x\right)^{\alpha+\eta-1} e^{-\left(\log\left(\frac{1}{a}\right)x\right)} \, dx \\&=& \frac{1}{\left(\log\left(\frac{1}{a}\right)\right)^{\alpha+\eta}} \int_0^{\varepsilon_0\log\left(\frac{1}{a}\right)} s^{\alpha+\eta-1} e^{-s} \, ds.
\end{eqnarray*}
The  last integral factor approaches $\Gamma(\alpha+\eta)$ uniformly on $\eta\in [0,1]$ as $a \to 0$.  The positive function $\phi: (0,1/2] \times [0,1] \to \R$, 
\[
\phi(a,\eta) := \frac{\left(\log\left(e + \frac{1}{a}\right)\right)^{\alpha+\eta}}{\left(\log\left(\frac{1}{a}\right)\right)^{\alpha+\eta}} \int_0^{\log\left(\frac{1}{a}\right)\varepsilon_0} s^{\alpha+\eta-1} e^{-s} \, ds,
\]
extends therefore to a continuous positive function on $[0,1/2]\times [0,1]$. 
Therefore there is a positive constant $c>0$ so that for all $z$ we have
\[
\int_0^{\varepsilon_0} \varepsilon^{\alpha-1+\eta} (W(z))^{\varepsilon} d\varepsilon \geq \frac{c}{\left(\log\left(e + \frac{1}{W(z)}\right)\right)^{\alpha+\eta}}
\]
which finishes the proof.

\end{proof}
Theorem \ref{thm:main1} is  obtained by applying Lemma \ref{le:apu3} in conjunction with inequality \eqref{eq:apu5} using the choices $W(z):=(1-|\mu(z)|)$ and $\alpha=4.$
\end{proof}

\noindent \textbf{Remark.} Generalizing Theorem \ref{thm:main1} for values $p \neq 1$ appears to require interpolating in Orlicz space settings instead of $L^2$ with suitable counterparts of Lemma \ref{le:yksi} and Lemma \ref{le:kaksi}. We have not attempted to carry out the necessary details for the generalisations  since it would considerably increase  the technicality of the paper.

\begin{proof}[Proof of Theorem \ref{co:3}] Following the argument of the proof of Lemma \ref{le:kaksi} and keeping track of the dependence of constant factors, we obtain under the assumption  \eqref{eq:fed} that instead of the result stated in Lemma \ref{le:kaksi} we obtain for general $p$ that
$$
\int_\D |Df|^2\log(e+|Df|)^{p-1}\log^{-\varepsilon}\big(e+|Df|\big) \leq C\varepsilon^{-(1+3p)}.
$$
Then, as before the statement follows by an application of Lemma \ref{le:apu3}.
\end{proof}

\textbf{Remark.}
We note that one may apply Lemma \ref{le:apu3} again directly on the result stated in Theorem \ref{thm:main1} and this yields that the integral 
\begin{equation*}
\int_A \frac{1}{\log^{4+\varepsilon}\left(e+K_f\right)} |D f|^2
\end{equation*}
is bounded by $\frac{C}{\varepsilon}$. Thus  taking $\alpha=1$ in Lemma \ref{le:apu3} we obtain a statement of the form
\begin{equation}\label{eq:mfd4}
\int_\mathbb{A} \frac{1}{\log^{4}\left(e+K_f\right)(\log\log\left(10+K_f\right))^{1+\varepsilon}} |D f|^2 < \infty \qquad \textrm{for any}\quad \varepsilon>0\nonumber.
\end{equation}
An industrious reader may refine this result by iterating the lemma, obtaining estimates for weights with more iterations of logarithms.

\section{Decay of the Neumann series}\label{se:neumann}

For the proof of Lemma \ref{le:kaksi} we need quantitative versions of several auxiliary results in \cite{AGRS}. In this section we  establish decay estimates for the Neumann series that  suffice both for Theorem \ref{thm:main1} and for Theorem \ref{thm:main2}. Our proof follows rather closely the ideas of \cite{Da,AGRS} but keeping track of the dependence of the constants is somewhat non-trivial even in the case $\alpha=1$ which relates to that considered in \cite{Da,AGRS}.

The Beurling operator $\mathcal{S}$ is the  singular integral
\[
\sss \phi(z) := -\frac{1}{\pi} \int_{\C} \frac{\phi(\tau)}{(z-\tau)^2}d\tau .
\]
Recall that in the context of quasiconformal mappings, the classical Beltrami equation in $W^{1,2}_{loc} (\C)$
\[
\overline{\partial} f (z) = \mu(z) \partial f(z)
\]
has a unique principal solution $f(z) = z + O(1/z)$ -- for this and other basic facts on quasiconformal maps we refer the reader to \cite{Ah,AIM}. We can use the identity $\partial f - 1 =  \sss ( \overline{\partial} f)$ to write the Beltrami equation equivalently for $\omega = \overline{\partial} f$
\[
\omega(z) = \mu(z)(\sss\omega(z) + 1),
\]
which is solved by the Neumann series
\[
\omega = (\mathbb{I} - \mu\sss)^{-1} \mu = \mu + \mu \sss \mu + \mu \sss \mu \sss \mu + \cdots.
\]
The series converges absolutely when $|\mu(z)| \leq k < 1$ almost everywhere because $\sss$ is a unitary operator in $L^2(\C)$. This is no longer true if only $|\mu(z)|<1$, but we have as substitute the estimates of Lemma \ref{le:Neumann}.  We state here a refined (in the case $\alpha=1$) and generalized  (for $\alpha>1$)  version of \cite[Theorem 3.1]{AGRS} needed for our purposes. Its proof is adapted from the original proof in \cite{AGRS}.

\begin{lemma}\label{le:Neumann} Let $|\mu(z)| < 1$ almost everywhere, with $\mu(z) \equiv 0$ for $|z|>1$. Assume that the  distortion function $K(z) = \frac{1+|\mu(z)|}{1-|\mu(z)|}$ satisfies
\[
e^{K^\alpha} \in L^p(\D),  \quad\textrm{for some}\quad p > 0 \;\;\textrm{and}\;\;\alpha \geq 1
\]
In case $\alpha >1$ we have for every $p>0$ and $\beta\in [p/2,p)$
\[
\int_{\C} |(\mu \sss)^n \mu|^2 \leq C \exp\left( -2(\beta/2)^{1/\alpha}\frac{1}{1-1/\alpha}\big((n+\beta/4+1)^{1-1/\alpha}-(\beta/4+1)^{1-1/\alpha}\big)\right) , \quad n \in \N,
\]
where by denoting $\delta:= \frac{(p-\beta)^2}{\beta(p+\beta)}$, \;  $ \widetilde C:=\frac{8p}{p-\beta}\left( \int_\D e^{pK^\alpha}\right)^{(p-\beta)/2p},$ $b:=(\beta/2)^{1/\alpha}$, and\\  $B:=\max\Big(\frac{ b}{1-1/\alpha}\left((2b/\delta)^{\alpha-1}-(1+\beta/4)^{1-1/\alpha}\right),0\Big)$ we have
\begin{equation}\label{eq:est1}
  C:=(4\delta^{-2} \widetilde Ce^{2B}+1).
\end{equation}
In the case $\alpha=1$ one instead has
\[
\int_{\C} |(\mu \sss)^n \mu|^2 \leq C_0  \Big(\frac{n+\beta/4+1}{\beta/4+1}\Big)^{-\beta} , \quad n \in \N,
\]
where 
\begin{equation}\label{eq:est2}
C_0:=12^{\beta+3}(p/\beta-1)^{-(5+2\beta)}\left( \int_\D e^{pK}\right)^{\frac{1}{2}(1-\beta/p)}.
\end{equation}
\end{lemma}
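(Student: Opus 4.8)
\textbf{Proof proposal for Lemma \ref{le:Neumann}.}

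The plan is to follow the weighted-estimate strategy of \cite{AGRS,Da} for bounding the iterates $(\mu\sss)^n\mu$ in $L^2$, but to carry out the combinatorial bookkeeping with explicit constants. The starting observation is that $\sss$ is an isometry on $L^2(\C)$, so the only loss in the Neumann iteration comes from the multiplication by $\mu$, which is harmless on the set where $|\mu|$ is bounded away from $1$ but must be controlled on the ``bad'' set where $K$ is large. The device is to introduce, for a suitable increasing sequence of thresholds, the sets $E_j:=\{z:K(z)>t_j\}$ and to split each factor of $\mu$ using $1=\chiups_{E_j}+\chiups_{\C\setminus E_j}$. On $\C\setminus E_j$ one has $|\mu|\le \frac{t_j-1}{t_j+1}$, giving a geometric-type gain; on $E_j$ one pays a price controlled by $\|e^{K^\alpha}\|_{L^p(\D)}$ via Chebyshev, namely $|E_j|\le \big(\int_\D e^{pK^\alpha}\big)e^{-p t_j^\alpha}$. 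The key point is to choose $t_j$ growing like $j^{1/\alpha}$ (so that after $n$ steps one has accumulated a gain whose logarithm behaves like $\sum_{j\le n} t_j^\alpha \sim \sum_{j\le n} j$ when $\alpha=1$, and like the telescoping sum producing the $(n+\beta/4+1)^{1-1/\alpha}$ expression for $\alpha>1$).

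Concretely, I would first state and prove the single-step lemma: for a multiplication operator $M_\mu$ and any measurable $E$, one has, for $\omega\in L^2$ with $\supp\omega\subset\D$,
\[
\|M_\mu \sss \omega\|_{L^2}\le \Big(\tfrac{t-1}{t+1}\Big)\|\omega\|_{L^2} + \|\sss\omega\|_{L^\infty(E)}\,|E|^{1/2},
\]
but since $\sss\omega$ need not be in $L^\infty$ one instead runs the argument in a weighted $L^2$ space: introduce a radial weight $\rho$ adapted to the thresholds and use the fact that $\sss$ is bounded on $L^2(\rho)$ with a norm controlled by the $A_2$-type characteristic of $\rho$. The cleanest route, and the one matching \cite{AGRS}, is Davies' trick: conjugate $\sss$ by $e^{\phi}$ for a carefully chosen bounded function $\phi$ supported near the bad set, so that the commutator gains a small factor while the bad multiplication is tamed; iterating $n$ times and optimizing the free parameters in $\phi$ yields the product of gains. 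The quantities $\delta$, $\widetilde C$, $b$, $B$ in the statement are precisely the outcome of this optimization: $\delta$ measures the spectral gap one retains after sacrificing part of the exponent ($\beta<p$), $\widetilde C$ is the Chebyshev constant in the chosen $L^{2p/(p-\beta)}$ pairing, $b=(\beta/2)^{1/\alpha}$ sets the rate at which the thresholds $t_j$ grow, and $B$ is the (nonnegative part of the) constant correction coming from the first few terms where the asymptotic regime has not yet kicked in.

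The main steps, in order: (1) reduce to principal solutions and to $\omega=\overline\partial f$, recording the isometry of $\sss$ and the support condition; (2) prove the weighted boundedness of $\sss$ with an explicit norm bound in terms of the weight's oscillation; (3) prove the one-step recursion with the geometric gain $(t-1)/(t+1)\le \exp(-2/t)$ off the bad set and the Chebyshev bound on the bad set, choosing the $j$-th threshold $t_j=(b^{-1}\cdot\text{something})$ so that $t_j^\alpha$ grows linearly in $j$; (4) iterate and telescope: for $\alpha=1$ the sum $\sum_{j=1}^{n}2/t_j$ with $t_j\sim j/b$ produces $\sim \beta\log n$, hence the polynomial decay $(\tfrac{n+\beta/4+1}{\beta/4+1})^{-\beta}$, while for $\alpha>1$ the sum $\sum 2/t_j$ with $t_j\sim (j/b)^{1/\alpha}$ is a Riemann sum for $\int (x/b)^{-1/\alpha}dx=\tfrac{b^{1/\alpha}}{1-1/\alpha}x^{1-1/\alpha}$, giving the stated exponential-of-power decay; (5) collect the finitely many non-asymptotic terms into the constants $C$ resp.\ $C_0$, tracking that the dependence is only through $\int_\D e^{pK^\alpha}$ and the gap $p-\beta$.

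The hard part will be step (4) together with the precise constant bookkeeping in (5): one must choose the thresholds $t_j$ and the auxiliary exponents so that the geometric gains, the Chebyshev losses, and the weighted-operator-norm penalties all combine to leave exactly the claimed decay, \emph{and} so that the leftover constant is finite and depends on the data only in the asserted way. In particular, verifying that $B$ as defined is the correct (finite, nonnegative) bound for the transient part of the telescoping sum—so that the tail of the sum $\sum 2/t_j$ is bounded below by the integral expression minus $B$—requires a careful comparison of the Riemann sum with the integral on the initial block $j\lesssim (2b/\delta)^\alpha$, where the summand is not yet monotone-comparable to the integrand in the convenient direction. The case $\alpha=1$ is the cleanest since the integral $\int dx/x$ is exactly $\log$, but even there one must be careful that the constant $12^{\beta+3}(p/\beta-1)^{-(5+2\beta)}$ genuinely absorbs all the accumulated multiplicative errors from the weighted bounds; this is where following \cite{AGRS} line by line, rather than reproving from scratch, is the efficient strategy.
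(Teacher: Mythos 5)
Your overall scaffolding—good/bad set decomposition with $n$-dependent thresholds $t_n$ growing like $n^{1/\alpha}$, Chebyshev to control $|B_n|$ via $\int_\D e^{pK^\alpha}$, and a telescoping sum turning the accumulated geometric gains into $(n+\beta/4+1)^{-\beta}$ for $\alpha=1$ or $\exp(-c\,n^{1-1/\alpha})$ for $\alpha>1$—matches the paper's framework (there $B_n=\{|\mu|>1-2\beta^{1/\alpha}/((4n)^{1/\alpha}+\beta^{1/\alpha})\}$ and the auxiliary iterates $g_n=\chiups_{G_n}\mu\sss(g_{n-1})$ carry the good-set decay). However, there is a genuine gap in your step for controlling the bad-set contribution. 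You propose to handle the term coming from $B_n$ by a weighted-$L^2$ boundedness of $\sss$ or a ``Davies-type'' conjugation $e^{-\phi}\sss e^{\phi}$, claiming this is ``the route matching \cite{AGRS}''. That is not what \cite{AGRS} (or \cite{Da}, or this paper) does, and it is not clear that such a conjugation argument produces the needed quantitative decay of $R(n):=\int_{B_n}|(\mu\sss)^n\mu|^2$: the weight would have to simultaneously tame the singular integral and localize to $B_n$, and no weight is exhibited for which the $A_2$-constant, the multiplication penalty, and the localization error all close.

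The actual mechanism in the paper is a Cauchy-integral-plus-area-distortion argument. One considers the holomorphic family of principal solutions $f^\lambda$ to $\overline{\partial}f^\lambda=\lambda\mu\,\partial f^\lambda$ for $|\lambda|<1$, reads $(\mu\sss)^n\mu$ as the $(n+1)$-st Taylor coefficient of $\lambda\mapsto\overline{\partial}f^\lambda$ via
\[
(\mu\sss)^n\mu=\frac{1}{2\pi i}\int_{|\lambda|=\rho}\frac{\overline{\partial}f^\lambda}{\lambda^{n+2}}\,d\lambda,
\]
restricts to $B_n$, and then bounds $\int_{B_n}|\overline{\partial}f^\lambda|^2$ by the Jacobian integral and hence by $|f^\lambda(B_n)|$, which is controlled by Astala's area distortion theorem $|f^\lambda(E)|\le \pi M|E|^{1/M}$ (or the sharper Eremenko--Hamilton form). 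It is precisely this estimate, combined with the Chebyshev bound $|B_n|\lesssim e^{-4np/\beta}$ and the choice $M=2p/(p-\beta)$, that yields the exponential decay $\sqrt{R(n)}\le\widetilde C e^{-\delta n}$ with $\delta=\tfrac{(p-\beta)^2}{\beta(p+\beta)}$, which in turn is what makes the Duhamel-type recursion for $\|\psi_n-g_n\|_{L^2}$ summable. Without this ingredient you have no route to the explicit constants $\delta$, $\widetilde C$, $B$ and the stated dependence only on $\int_\D e^{pK^\alpha}$; the rest of the bookkeeping (the inequality $\log(1-x)\le -x$, the comparison of $\sum(j+\beta/4)^{-1/\alpha}$ to the corresponding integral, the limit $\alpha\to 1^+$ to recover the $\alpha=1$ case) is as you describe.
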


\begin{proof}
We first note that a simple computation shows that the case $\alpha=1$ follows from the case $\alpha>1$ by first assuming that $\|\mu\|_\infty <1$ and letting $\alpha\to 1^{+}$  in estimate \eqref{eq:est1}. Hence we may assume that $\alpha >1$ and  start by fixing
$0 < \beta < p$. For $n \in \N$ divide the unit disk into two sets
\[
B_n = \left\lbrace z \in \D : |\mu(z)| > 1 - \frac{2\beta^{1/\alpha}}{(4n)^{1/\alpha} + \beta^{1/\alpha}} \right\rbrace
\]
and
\[
G_n = \D \setminus B_n.
\]
By Chebychev's inequality,
\[
|B_n| \leq \Big(\int_{\D} e^{pK^\alpha}\Big) e^{-4np/\beta}.
\]
The terms of the Neumann series $\psi_n = (\mu \sss)^n \mu$ and the auxiliary terms $g_n$ are obtained inductively
\[
\psi_n = \mu \sss(\psi_{n-1}), \qquad \psi_0 = \mu
\]
and
\[
g_n = \chi_{G_n} \mu \sss(g_{n-1}), \qquad g_0 = \mu.
\]
For $g_n$ we can estimate by using the fact that $\sss$ is $L^2$-isometry to see that
\[
\|g_n\|_{L^2}^2 = \int_{G_n} |\mu S(g_{n-1})|^2 \leq \left( 1 - \frac{2\beta^{1/\alpha}}{(4n)^{1/\alpha} + \beta^{1/\alpha}} \right)^2 \|g_{n-1}\|_{L^2}^2,
\]
and therefore
\[
\|g_n\|_{L^2} \leq \prod_{j=1}^n \left( 1 - \frac{2\beta^{1/\alpha}}{(4j)^{1/\alpha} + \beta^{1/\alpha}} \right)\|g_0\|_{L^2} = \exp\left(\sum_{j=1}^n \log\left( 1 - \frac{2\beta^{1/\alpha}}{(4j)^{1/\alpha} + \beta^{1/\alpha}} \right) \right) \|\mu\|_{L^2}.
\]
As $\log(1-x) \leq -x $ for $x<1$, and  $\|\mu\|_{L^2} \leq \sqrt{\pi}$, it follows that
\begin{eqnarray*}
\|g_n\|_{L^2} &\leq& \exp\left(-2^{1-2/\alpha}\beta^{1/\alpha}\sum_{j=1}^n \frac{1}{j^{1/\alpha} + (\beta/4)^{1/\alpha}}  \right) \pi^{1/2}\\
&\leq&  \exp\left(-2^{-1/\alpha}\beta^{1/\alpha}\sum_{j=1}^n \frac{1}{(j+\beta/4)^{1/\alpha}}  \right) \pi^{1/2},
\end{eqnarray*}
where we applied the inequality $(j)^{1/\alpha} + (\beta/4)^{1/\alpha}\leq 2^{1-1/\alpha}(j+\beta/4)^{1/\alpha}.$
The sum inside the exponential can be estimated by an integral
\begin{align}\label{eq:integral}
\sum_{j=1}^n \frac{1}{(j+\beta/4)^{1/\alpha}} \geq \int_{1}^{n+1} \frac{dx}{(x+\beta/4)^{1/\alpha}  } 
= \frac{(n+1+\beta/4)^{1-1/\alpha}-(1+\beta/4)^{1-1/\alpha}}{(1-1/\alpha)} ,
\end{align}
so that
\begin{equation}\label{eq:aa}
\|g_n\|_{L^2} \leq  \exp \left( -2^{-1/\alpha}\beta^{1/\alpha}  \frac{(n+1+\beta/4)^{1-1/\alpha}-(1+\beta/4)^{1-1/\alpha}}{1-1/\alpha}  \right).
\end{equation}
The difference of $\psi_n$ and $g_n$ is
\[
\psi_n-g_n = \chi_{G_n} \mu \sss(\psi_{n-1}-g_{n-1}) + \chi_{B_n} \mu \sss(\psi_{n-1}).
\]
 For the norms, this gives

\[
\|\psi_n-g_n\|_{L^2} \leq  \left(1- \frac{2\beta^{1/\alpha}}{(4j)^{1/\alpha} + \beta^{1/\alpha}} \right) \|\psi_{n-1}-g_{n-1}\|_{L^2}+ \sqrt{R(n)}
\]
with
\[
R(n) = \|\chi_{B_n} \mu \sss(\psi_{n-1})\|_{L^2}^2 = \int_{B_n} |(\mu \sss)^n\mu|^2. 
\]
By induction and estimating like in \eqref{eq:integral} we deduce
\begin{align}\label{eq:a}
\|\psi_n-g_n\|_{L^2} &\leq  \left(1- \frac{2\beta^{1/\alpha}}{(4n)^{1/\alpha} + \beta^{1/\alpha}} \right) \|\psi_{n-1}-g_{n-1}\|_{L^2}+ \sqrt{R(n)}\nonumber\\
& \leq \sum_{j=1}^n \sqrt{R(j)} \prod_{k=j+1}^n \left(1- \frac{2\beta^{1/\alpha}}{(4k)^{1/\alpha} + \beta^{1/\alpha}} \right)\nonumber\\
&= \sum_{j=1}^n \sqrt{R(j)} \exp\left(\sum_{k=j+1}^n \log \left(1- \frac{2\beta^{1/\alpha}}{(4k)^{1/\alpha} + \beta^{1/\alpha}} \right)\right)\nonumber\\
& \leq \sum_{j=1}^n \sqrt{R(j)} \exp\left(-2^{1-2/\alpha}\beta^{1/\alpha} \sum_{k=j+1}^n \frac{1}{k^{1/\alpha}+(\beta/4)^{1/\alpha}}\right)\nonumber\\
& \leq \sum_{j=1}^n \sqrt{R(j)}\exp\left( -2^{-1/\alpha}\beta^{1/\alpha}  \frac{(n+1+\beta/4)^{1-1/\alpha}-(j+1+\beta/4)^{1-1/\alpha}}{1-1/\alpha}  \right)\nonumber\\
& = \exp\left( -2^{-1/\alpha}\beta^{1/\alpha}\frac{1}{1-1/\alpha}\big((n+1)^{1-1/\alpha}-(1+\beta/4)^{1-1/\alpha}\big)\right) \times
\nonumber
\\& \times\sum_{j=1}^{n} \exp\left(2^{-1/\alpha}\beta^{1/\alpha}\frac{1}{1-1/\alpha}\big((j+1+\beta/4)^{1-1/\alpha}-(1+\beta/4)^{1-1/\alpha}\big)\right) \sqrt{R(j)}.
\end{align}

We next recall that in \cite{AGRS} the Astala area distortion result $|f^\lambda(E)|\leq \pi M|E|^{1/M}$ for quasiconformal maps was used to estimate $R(n)$ by considering  the solution $f = f^\lambda$ to the Beltrami equation
\[
\overline{\partial}f = \lambda \mu \partial f,
\]
with $|\lambda|<1$ via expressing the term $(\mu \mathcal{S})^n \mu$ of the Neumann series
$
\overline{\partial}f^\lambda = \sum_{n=0}^\infty \lambda^{n+1} (\mu \mathcal{S})^n \mu.
$
by a Cauchy integral
\[
(\mu \mathcal{S})^n \mu = \frac{1}{2\pi i} \int_{|\lambda|= \rho} \frac{1}{\lambda^{n+2}} \overline{\partial}f^\lambda d\lambda,
\]
multiplying by the characteristic function $\chi_{B_n}$ and by forcing the Jabobian to appear under the integral. This yielded (see \cite[p. 8]{AGRS}) for any $\mu$ with just $\|\mu\|_\infty \leq 1$,  any $M > 1$, and any $E\subset \D$ 
the general estimates
\begin{align}\label{eq:old}
 \|\chi_{E}\psi_{n}\|^2_{L^2}
&\leq  \pi \left( \frac{M+1}{M-1} \right)^{2n} \frac{(M+1)^2}{4}|E|^{1/M}\qquad\textrm{and}
\end{align}
\begin{align}\label{eq:old'}
 \|\chi_{E} \sss(\psi_{n})\|^2_{L^2}
&\leq  \pi \left( \frac{M+1}{M-1} \right)^{2n+2} \frac{(M+1)^2}{4}|E|^{1/M}.
\end{align}
Choosing $E=B_n$ this yields
\begin{align*}
\sqrt{R(n)} 
&\leq  \sqrt{\pi} \left( \frac{M+1}{M-1} \right)^{n} \frac{(M+1)}{2}|B_n|^{1/2M}\\
&\leq \sqrt{\pi} \left( \frac{M+1}{M-1} \right)^{n} \frac{(M+1)}{2}\left( \int_\D e^{pK^\alpha} \right)^{1/2M}e^{-2np/(\beta M)}.
\end{align*}
In our situation we may actually slightly improve this by invoking the  Eremenko and Hamilton form of the area distortion estimate stating for any measurable $E \subset \D$ the inequality
\begin{equation}\label{eq:area_old}
|g(E)| \leq M^{1/M}\pi^{1-1/M}|E|^{1/M} \leq \pi e^{1/(\pi e)} |E|^{1/M}.
\end{equation}
This leads to  
\begin{align*}
\sqrt{R(n)} & \leq \sqrt{\pi} e^{1/(2\pi e)} \left( \frac{M+1}{M-1} \right)^{n} \sqrt{\frac{(M+1)^2}{4M}}\left( \int_\D e^{pK^\alpha} \right)^{1/2M}e^{-2np/(\beta M)}.
\end{align*}

We want to choose $M>1$ in order to force $R(n)$ to decay exponentially. For that we need to have
\[
\log\left( \frac{M+1}{M-1} \right) - \frac{2}{M} \frac{p}{\beta} \leq-\delta < 0.
\]
Choose $M=2p/(p-\beta)$ and estimate
\begin{align*}
\log\left( \frac{M+1}{M-1} \right) - \frac{2}{M} \frac{p}{\beta} &\leq \frac{2}{M-1} - \frac{2}{M} \frac{p}{\beta}
 = \frac{2(p-\beta)}{p+\beta} - \frac{2(p-\beta)}{2\beta}\\
&= -\frac{(p-\beta)^2}{\beta(p+\beta)}
\;\; =:\;-\delta.
\end{align*}
Noting that $\frac{(M+1)^2}{4M}\leq M$ and $ \sqrt{\pi} e^{1/(2\pi e)} \leq 2$ this yields
\begin{equation}\label{eq:R}
\sqrt{R(n)} \leq \widetilde C e^{-\delta n}\qquad \textrm{with}\quad \widetilde C:= \sqrt{\frac{8p}{p-\beta}}\left( \int_\D e^{pK^\alpha}\right)^{(p-\beta)/4p}.\nonumber
\end{equation}
Hence, if we denote $b:=2^{-1/\alpha}\beta^{1/\alpha}$, we obtain
\begin{eqnarray*}
&&\sum_{j=1}^{n} \exp\left(2^{-1/\alpha}\beta^{1/\alpha}\frac{1}{1-1/\alpha}\Big((j+1+\beta/4)^{1-1/\alpha}-(1+\beta/4)^{1-1/\alpha}\Big)\right) \sqrt{R(j)}\\
&\leq& \widetilde Ce^{\widetilde B}\sum_{j=1}^\infty e^{-j\delta/2}\;\leq\; 2\delta^{-1} \widetilde Ce^{\widetilde B},
\end{eqnarray*}
where $\widetilde B:=\sup_{j\geq 1}\Big(\frac{b}{1-1/\alpha}\big((j+1+\beta/4)^{1-1/\alpha}-(1+\beta/4)^{1-1/\alpha}\big)-(\delta/2)j\Big).$ An elementary computation where one simply differentiates with respect to $j$ shows that 
$$
\widetilde B\leq  B:=\max\Big(\frac{ b}{1-1/\alpha}\left((2b/\delta)^{\alpha-1}-(1+\beta/4)^{1-1/\alpha}\right),0\Big).
$$
In view of \eqref{eq:a} we then obtain
\begin{eqnarray*}\|\psi_n-g_n\|_{L^2} &\leq&
  2\delta^{-1} \widetilde Ce^B\exp\left( -2^{-1/\alpha}\beta^{1/\alpha}\frac{1}{1-1/\alpha}\big((n+1+\beta/4)^{1-1/\alpha}-(\beta/4)^{1-1/\alpha}\big)\right).
\end{eqnarray*}
Together with \eqref{eq:aa} this proves the lemma.
\end{proof}

\section{Proof of Lemma \ref{le:kaksi} and Theorem \ref{thm:main2}}\label{sec:proofoflemma2}

We start with an area distortion result that generalizes \cite[Cor. 3.2 and Thm. 5.1]{AGRS} to the range $\alpha \geq1$. In case $\alpha=1$ we need to keep careful track of the constants, which is somewhat non-trivial in this situation, and hence for the readers sake we give the details here although the basic idea of the proof follows that in \cite{Da,AGRS}. Thus for $\alpha=1$ the novelty of the statement as compared to \cite[Thm 5.1]{AGRS} is in a precise estimation of the dependencies of the constant terms. This is a crucial technical ingredient needed for our main results.
\begin{proposition}\label{prop:area} Let $\mu$ and $0<\beta < p$  and $f$ be as in  Lemma \ref{le:Neumann}.

\smallskip

\noindent {\bf (i)}\quad
In case $\alpha>1$  we have the area distortion estimate
\begin{equation}\label{eq:ad1}
|f(E)|\leq c\exp\big(-c'\log^{1-1/\alpha}(e+1/|E|)\big)
\end{equation}
with some constants $c,c'>0$.

\smallskip

\noindent {\bf (ii)}\quad
In case $\alpha=1$, under the additional assumption $1/2<\beta<p<4$, it holds that
\begin{equation}
|f(E)| \;\leq\;  A_2|E|^{\frac{\delta}{24}}+ A_2\delta^{-3\beta}\log^{-\beta}(e
+1/|E|)\left( \int_\D e^{pK}\right)^{1/2}, \qquad E \subset \D,
\end{equation}
where we denoted $\delta:=p-\beta$ and $A_2$ is a universal constant.
\end{proposition}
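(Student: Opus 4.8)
The plan is to obtain the area distortion estimate by decomposing $\overline{\partial} f$ into its Neumann series $\sum_{n\geq 0}(\mu\sss)^n\mu$ and estimating the contribution of each term on a set $E\subset\D$, just as in \cite{AGRS}, but now inserting the quantitative Neumann-series decay from Lemma \ref{le:Neumann}. First I would fix $E\subset\D$ and split the tail: for a truncation parameter $N$ (to be chosen depending on $|E|$), write $\overline{\partial}f=\sum_{n=0}^{N}(\mu\sss)^n\mu+\sum_{n>N}(\mu\sss)^n\mu$. For the high-order tail, Lemma \ref{le:Neumann} gives $\|(\mu\sss)^n\mu\|_{L^2(\C)}^2\leq C_0\big((n+\beta/4+1)/(\beta/4+1)\big)^{-\beta}$ when $\alpha=1$ (resp.\ the stretched-exponential bound when $\alpha>1$), so $\|\sum_{n>N}(\mu\sss)^n\mu\|_{L^2}$ is controlled by a power of $N$ (resp.\ superpolynomially small), and hence so is $|f(E)|$ via $J(z,f)=|\partial f|^2-|\overline\partial f|^2\leq|\partial f|^2$ together with $\partial f-1=\sss(\overline\partial f)$ and the $L^2$-isometry of $\sss$. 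For the low-order part $n\leq N$, the key point is the elementary bound $\|\chi_E(\mu\sss)^n\mu\|_{L^2}^2\leq \|\chi_E\|\cdot\|(\mu\sss)^n\mu\|_\infty^2$-type reasoning is too crude; instead one uses, exactly as in \cite{AGRS}, the Astala/Eremenko--Hamilton area distortion inequality \eqref{eq:area_old} in the form \eqref{eq:old}--\eqref{eq:old'}, i.e.\ $\|\chi_E\sss(\psi_n)\|_{L^2}^2\leq \pi\left(\tfrac{M+1}{M-1}\right)^{2n+2}\tfrac{(M+1)^2}{4}|E|^{1/M}$ for any $M>1$; summing over $n\leq N$ with a fixed $M$ gives a bound of the shape $|E|^{1/M}$ times $\left(\tfrac{M+1}{M-1}\right)^{2N}$.

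The heart of the argument is then the optimization: choosing $M$ and $N$ as functions of $|E|$ to balance the geometric factor $\left(\tfrac{M+1}{M-1}\right)^{2N}|E|^{1/M}$ against the Neumann-tail contribution. For $\alpha=1$, taking $N\sim \log(1/|E|)$ and $M$ comparable to $p/(p-\beta)$ turns $\left(\tfrac{M+1}{M-1}\right)^{2N}|E|^{1/M}$ into a term like $|E|^{c\delta}$ (this accounts for the $|E|^{\delta/24}$ summand, after absorbing universal numerical losses), while the Neumann tail $\sum_{n>N}\|\psi_n\|_{L^2}$ becomes $\lesssim C_0^{1/2}N^{-\beta/2+1}\sim \big(\int_\D e^{pK}\big)^{1/4}\delta^{-(5/2+\beta)}\log^{-\beta/2+1}(1/|E|)$; squaring and re-bookkeeping the exponents (and using $\beta>1/2$, $p<4$ to tame the $\delta$-powers into $\delta^{-3\beta}$) yields the claimed $\delta^{-3\beta}\log^{-\beta}(e+1/|E|)\big(\int_\D e^{pK}\big)^{1/2}$ term. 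For $\alpha>1$ one instead plugs in the stretched-exponential decay from Lemma \ref{le:Neumann}: the tail is then superpolynomially small, and choosing $N\sim \log(1/|E|)$ and $M$ of order $\log^{1/\alpha}(1/|E|)$ (so that $N/M\sim \log^{1-1/\alpha}(1/|E|)$ while $N\log\tfrac{M+1}{M-1}\sim N/M$ as well) makes both pieces of size $\exp(-c'\log^{1-1/\alpha}(e+1/|E|))$, giving \eqref{eq:ad1}.

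I expect the main obstacle to be the constant-tracking in the $\alpha=1$ case: one must carry the explicit $C_0$ from \eqref{eq:est2} — which already carries a factor $(p/\beta-1)^{-(5+2\beta)}$ and $12^{\beta+3}$ — through the truncation, the squaring, and the optimization over $M$, while making sure the final dependence collapses to a clean $\delta^{-3\beta}$ with a genuinely \emph{universal} $A_2$ and that the $|E|$-power does not degenerate as $\beta\to p$. The restriction $1/2<\beta<p<4$ is presumably exactly what is needed to keep these numerical manipulations uniform (e.g.\ to bound $(\beta/4+1)^{\beta}$, $\Gamma$-type factors, and the crossover point $N\asymp\log(1/|E|)$ from below). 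A secondary technical point is justifying the interchange of the Cauchy-integral/Neumann-series representation with the characteristic function $\chi_E$ and the passage from the isometry bound on $\sss(\psi_n)$ to a bound on $|f(E)|=\int_E J(z,f)$, but this is routine and identical to \cite{AGRS}; the only real work is the bookkeeping. Once Proposition \ref{prop:area} is in hand, Lemma \ref{le:kaksi} and the $\alpha>1$ half of Theorem \ref{thm:main2} follow by the iteration/distribution-function arguments of \cite{AGRS} combined with Lemma \ref{le:apu3}.
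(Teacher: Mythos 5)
Your overall skeleton (write $|f(E)| \leq 2|E| + 2\bigl(\sum_n \|\chi_E \sss\psi_n\|_2\bigr)^2$, truncate at an index depending on $|E|$, bound the low-order part by the quasiconformal area distortion estimates \eqref{eq:old}--\eqref{eq:old'}, and bound the tail by Lemma \ref{le:Neumann}) matches the paper's first step, and your treatment of $\alpha>1$ is essentially correct in spirit (the paper actually fixes $\beta=p/2$ and $M=3$ rather than optimizing $M$, but either works since constants are irrelevant there). However, there is a genuine gap in your plan for part (ii).

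The problem is that for $\alpha=1$ Lemma \ref{le:Neumann} gives $\|\psi_n\|_{L^2}\lesssim C_0^{1/2}\, n^{-\beta/2}$, so the tail $\sum_{n>N}\|\chi_E\sss\psi_n\|_2$ is controlled by $\sum_{n>N} n^{-\beta/2}$, which \emph{diverges} when $\beta\le 2$. Your proposal says this tail "becomes $\lesssim C_0^{1/2}N^{-\beta/2+1}$", which silently assumes $\beta>2$. But the proposition must cover all $1/2<\beta<p<4$, and in particular $\beta$ can be just above $1/2$. No choice of $M$ in the low-order sum can rescue a tail that does not converge, so the direct optimization you describe cannot reach the stated range.

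The missing idea is G.~David's factorisation trick, which is the heart of the paper's proof of (ii). One first proves the estimate under the stronger assumption $2<\beta<p$ (where the tail does converge, giving the factor $(\beta-2)^{-2}$) with a fixed $M=3$ in the low-order sum. Then, for the full range $1/2<\beta<p<4$, one factorises $f=g\circ F$ where $g$ is a principal $M$-quasiconformal map and $F$ has exponentially integrable distortion at the much larger exponent $pM$ (with $M=4/(p-\beta)$, at the cost of only an $e^M$ blow-up in the integral). Applying the first-stage estimate to $F$ with the rescaled parameters $(M\beta_0, Mp)$ — which now lie in the regime $>2$ — and composing with the Eremenko--Hamilton area distortion bound \eqref{eq:area_old} for $g$ produces the stated inequality. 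It is this factorisation, not a cleverer choice of $M$ in the truncated sum, that both makes the argument converge for small $\beta$ and yields the clean $\delta^{-3\beta}$ constant. You should rebuild the $\alpha=1$ branch of your proof around this two-stage structure.
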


\begin{proof}[Proof of Proposition \ref{prop:area}]
We start by observing that our maps are  Sobolev homeomorphisms that satisfy Lusin's condition $\mathcal{N}$. Especially, we obtain (using the notation of the previous section) 
\begin{eqnarray}\label{eq:ukk}
|f(E)| &=& \int_E |\partial f|^2 - |\overline{\partial} f|^2 \leq 2|E|+2\int_E |\partial f-1|^2 = 2|E|+2\|\chi_E \big(\partial f-1\big)\|_2^2\\
&\leq & 2|E| +2\Big(\sum_{n=0}^\infty\|\chi_ES\psi_n\|_2\Big)^2.\nonumber
\end{eqnarray}
We estimate the last written sum in two parts, and fix to that end an index $m\geq 1$ that will be specified later on. First of all,  using \eqref{eq:old} with $M=3$ yields
\begin{align}\label{eq:osayks}
\sum_{n=0}^{m-1} \|\chi_ES\psi_n\|_2 \leq \sum_{n=0}^{m-1} \sqrt{\pi} 2^{n+2}  |E|^{1/6}
\leq  2^{m+3} |E|^{1/6}.
\end{align}
In case $\alpha >1$ we  choose $\beta=p/2$ in Lemma \ref{le:Neumann} and obtain with small work the estimate
$$
\sum_{n=m}^{\infty} \|\chi_ES\psi_n\|_2\leq c_1\exp(-c_2m^{1-1/\alpha}).
$$
Here and later $c_j$:s are constants that may depend on $\beta,p,\alpha$, and whose exact value is of no interest to us. By choosing $m= \lfloor 2+\frac{1}{12\log 2}\log 1/|E| \rfloor$ we obtain in view of \eqref{eq:ukk} and the previous estimates
\begin{align*}
|f(E)|\leq 2|E|+c_3|E|^{1/12}+ 4c_1\exp\big(-c_4\log^{1-1/\alpha}(e+1/|E|)\big),
\end{align*}
which proves part (i).

In case (ii) we have $\alpha=1$. In this case we first assume that $2<\beta <p$ and an application of Lemma \ref{le:Neumann} yields in this case
\begin{align*}
\Big(\sum_{n=m}^{\infty} \|\chi_E\psi_n\mu\|_2\Big)^2\leq C_0 \left(\sum_{n=m}^{\infty}  \Big(\frac{n+\beta/4+1}{\beta/4+1}\Big)^{-\beta/2}\right)^2\leq \frac{4C_0(\beta/4+1)^2}{(\beta-2)^2}\Big(\frac{m+\beta/4}{\beta/4+1}\Big)^{2-\beta}
\end{align*}
where the expression for the constant $C_0=C_0(\beta,p)$ is given in \eqref{eq:est2}. In view of \eqref{eq:osayks} we thus have
\[
|f(E)| \leq  2|E|+ 2^{2m+7} |E|^{1/3}+ \frac{8C_0(\beta/4+1)^2}{(\beta-2)^2}\Big(\frac{m+\beta/4}{\beta/4+1}\Big)^{2-\beta} .
\]
Choosing $m=\big\lceil\dfrac{1}{12\log 2}\log(1+1/|E|)\big\rceil$ and noting that $|E|\leq 4|E|^{1/6}$ and $12\log 2\leq 9$  yields
\begin{align}\label{eq:first area}
|f(E)|\leq 2000|E|^{1/6}+ \frac{8C_0(\beta/4+1)^2}{(\beta-2)^2}\Big(\frac{\log((1/|E|+1)^{1/9})+\beta/4}{1+\beta/4}\Big)^{2-\beta} 
\end{align}

Our next step is to apply G. David's factorisation trick to improve the above bound and extend it to all values of $p$. We assume thus  that $f$ is as in the statement of the proposition (with the general assumption $1/2<\beta<p<4$) and
recall from \cite{AGRS} that for any $M\geq 1$ we may factorise $f$   as $f = g \circ F$, where $g$ and $F$ are principal mappings, $g$ is $M$-quasiconformal and $F$ satisfies
\[
I_M:=\int_\D e^{pMK(z,F)} \leq e^M \int_\D e^{pK(z,f)}.
\]
Denote $\beta_0:=( p+\beta)/2$, and $M=2/(\beta_0-\beta)=4/(p-\beta)\geq 1$. 
We will apply \eqref{eq:first area} with parameters $(M\beta_0,Mp)$ instead of $(\beta,p)$ in order to estimate $|F(E)|$. This is possible since by the assumption $p<4$ we have $2<2+\beta M=\beta_0M<pM$. Thus,
\[
|F(E)|\leq 2000|E|^{1/6}+ \frac{8C_0(M\beta_0,Mp,I_M)(M\beta_0/4+1)^2}{(M\beta_0-2)^2}\Big( \frac{\log((1+1/|E|)^{1/9})+M\beta_0/4}{1+M\beta_0/4}\Big)^{2-M\beta_0}. 
\]
Above the notation $C_0(M\beta_0,Mp,I_M)$ recalls the dependences of the constant $C_0$.
 As $g$  is a principal quasiconformal mapping, we obtain from the standard area distortion estimate \eqref{eq:area_old}
$$
|f(E)| = |g \circ F(E)| \leq 4|F(E)|^{1/M} .
$$
By noting that $2-M\beta_0=-M\beta$,  and $M\beta_0/4= \frac{1}{2}\frac{p+\beta}{p-\beta}>(p/\beta-1)^{-1}$, combing the  last two inequalities leads to
\begin{eqnarray}\label{eq:uuu}
&&|f(E)|
\;\leq\; 8000|E|^{1/6M}+\nonumber\\
&&+ 4\cdot8^{1/M}\Big(\frac{C_0(M\beta_0,Mp,I_m)(M\beta_0/4+1)^2}{(M\beta)^2}\Big)^{1/M}\Big( \frac{(\log(1+1/|E|))^{1/6}+(p/\beta-1)^{-1}}{1+(p/\beta-1)^{-1}}\Big)^{-\beta}.
\end{eqnarray} 
Here, since $Mp/M\beta_0-1= (p-\beta)/(p+\beta)\geq (p-\beta)/2p$  and $(p/\beta_0-1)/2M\leq 1/2$ we obtain by recalling \eqref{eq:est2} and easy estimates
\begin{eqnarray*}
&&8^{1/M}\Big(\frac{C_0(M\beta_0,Mp,I_m)(M\beta_0/4+1)^2}{(M\beta)^2}\Big)^{1/M} \; \leq\;
 A_1(p/\beta-1)^{-2\beta}\left( \int_\D e^{pK}\right)^{1/2},
\end{eqnarray*} 
where $A_1$ is an absolute constant. In the simplification  we applied our assumption on the range of $p$ and $\beta$ and observed that $(p-\beta)^{-(p-\beta)}$ has a universal upper bound. We  also observe in \eqref{eq:uuu} that $(p/\beta-1)^\beta$ has a universal upper bound, and by increasing $A_1$ we may replace $\log(1+1/|E|)^{1/6}$ by $\log(1+1/|E|)$. In addition, in our situation $p/\beta-1\asymp (p-\beta)$.  Combining these estimates
completes the proof of part (ii).
\end{proof}

We then turn to the goals stated in the title of this section. As expected, we will first estimate the integrability of the Jacobian using the estimates for area-distortion we just proved.  For that purpose we will first state a general lemma that yields (essentially optimal) integrability estimates from estimates of area distortion.

\begin{lemma}\label{le:HLP} Assume that $f$ is a principal mapping of finite distortion and $g:[0,\pi)\to [0,\infty)$ is concave with $g(0)=0$, satisfying for any measurable subsets $E\subset B(0,1)$ the area distortion estimate
\begin{equation}\label{eq:ad}
|f(E)|\leq g(|E|).
\end{equation}
Then for any convex and increasing  $H$ on $[0,\infty)$ it holds that
$$
\int_{B(0,1)}H(J_f(z))dA(z)\leq \int_0^{\pi}H(g'(t))dt.
$$
\end{lemma}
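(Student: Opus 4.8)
The plan is to prove Lemma~\ref{le:HLP} by reducing the area distortion hypothesis \eqref{eq:ad} to a pointwise rearrangement inequality for the Jacobian, and then invoking a Hardy--Littlewood--P\'olya type majorization argument. First I would introduce the decreasing rearrangement: since $f$ is a principal Sobolev homeomorphism satisfying Lusin's condition $\mathcal{N}$, the Jacobian $J_f$ is a nonnegative integrable function on $B(0,1)$, and for each $t\in(0,\pi)$ the sublevel consideration gives $\int_E J_f = |f(E)|$. Taking the supremum over all measurable $E\subset B(0,1)$ with $|E|=t$ picks out exactly the integral of the largest values of $J_f$, i.e. $\int_0^t J_f^*(s)\,ds = \sup_{|E|=t}\int_E J_f \leq g(t)$, where $J_f^*$ denotes the decreasing rearrangement of $J_f$ on $[0,\pi)$ (note $|B(0,1)|=\pi$).

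The second step records the key comparison: because $g$ is concave with $g(0)=0$, its derivative $g'$ is decreasing, so $g'$ is its own decreasing rearrangement on $[0,\pi)$, and $\int_0^t g'(s)\,ds = g(t)$. Hence we have the inequality between running integrals of two decreasing functions:
\[
\int_0^t J_f^*(s)\,ds \leq \int_0^t g'(s)\,ds \qquad \text{for all } t\in[0,\pi),
\]
with equality of total masses not assumed (we only have $\leq$). This is precisely the hypothesis of the Hardy--Littlewood--P\'olya majorization principle in its "Hardy--Littlewood--P\'olya relation" form: if $\phi$ and $\psi$ are decreasing on $[0,L]$ with $\int_0^t\phi \leq \int_0^t\psi$ for all $t$, then $\int_0^L H(\phi)\leq \int_0^L H(\psi)$ for every convex increasing $H$ (here one uses monotonicity of $H$ to handle the lack of equal total mass). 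Applying this with $\phi=J_f^*$, $\psi=g'$, $L=\pi$ gives
\[
\int_{B(0,1)} H(J_f(z))\,dA(z) = \int_0^\pi H(J_f^*(s))\,ds \leq \int_0^\pi H(g'(t))\,dt,
\]
where the first equality is the standard fact that integrating a function against a monotone (here, $H$ applied to it, with $H$ increasing so $H(J_f)^* = H(J_f^*)$) is rearrangement-invariant. This is the desired conclusion.

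The main obstacle, and the step deserving the most care, is the majorization lemma itself: proving that $\int_0^t\phi\le\int_0^t\psi$ for all $t$ (with both decreasing) implies $\int H(\phi)\le\int H(\psi)$ for convex increasing $H$. The cleanest route is to write $H(\phi(s)) = H(0) + \int_0^\infty \mathbf{1}_{\{\phi(s)>\lambda\}}\,dH(\lambda)$ via the layer-cake / Stieltjes representation of the convex increasing $H$, reduce by Fubini to showing $|\{\phi>\lambda\}|\le$ something, and then use the decreasing structure: $\int_0^t(\phi-\lambda)^+\,ds$ as a function of $t$ controls the measure of $\{\phi>\lambda\}$, and $\int_0^t(\phi-\lambda)\,ds \le \int_0^t(\psi-\lambda)\,ds$ transfers the inequality to the truncations. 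Equivalently one can argue directly: for fixed $\lambda$, let $t_\phi=|\{\phi>\lambda\}|$; then $\int_0^\pi(\phi-\lambda)^+ = \int_0^{t_\phi}(\phi-\lambda) \le \int_0^{t_\phi}\psi - \lambda t_\phi \le \int_0^{t_\phi}(\psi-\lambda)^+ \le \int_0^\pi(\psi-\lambda)^+$, using $\int_0^{t_\phi}\psi \ge \int_0^{t_\phi}\phi$ only in the reverse (so actually one wants $\int_0^{t_\phi}\phi\le\int_0^{t_\phi}\psi$ from the hypothesis directly), and then integrating $d\mu_H(\lambda)$ where $H(x)-H(0)=\int_{[0,x)}(\text{something})$; the truncation identity $H(x) = H(0) + H'(0^+)x + \int_0^\infty (x-\lambda)^+\,dH'(\lambda)$ for convex $H$ makes this rigorous. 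I would also double-check the innocuous points: that $g$ being defined only on $[0,\pi)$ with the stated concavity makes $g'$ well-defined a.e.\ and decreasing, that $|B(0,1)|=\pi$ matches the upper limit $\pi$ in the integral, and that Lusin $\mathcal{N}$ (already noted in the proof of Proposition~\ref{prop:area}) is what legitimizes $|f(E)|=\int_E J_f$.
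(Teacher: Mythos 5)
Your proof is correct and takes essentially the same route as the paper: rewrite the area distortion hypothesis \eqref{eq:ad} as a comparison of running integrals, $\int_0^t J_f^*(s)\,ds \leq \int_0^t g'(s)\,ds$ for all $t\in(0,\pi)$, between the decreasing rearrangement of $J_f$ and the decreasing function $g'$, and then apply the continuous Hardy--Littlewood--P\'olya majorization theorem for convex increasing $H$. The only difference is that the paper simply cites the majorization principle (Chong, \emph{Canad.\ J.\ Math.}\ 26 (1974), Thm.\ 2.1), while you also sketch its proof via the truncation representation $H(x)=H(0)+H'(0^+)x+\int_0^\infty(x-\lambda)^+\,dH'(\lambda)$; that sketch is sound.
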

\begin{proof} Let us denote by $h:(0,\pi)\to \R_+$ the decreasing rearrangement of $J_f$. By assuming first that $g$ is differentiable on $(0,\pi)$, our assumption  may be rewritten as 
$$
\int_0^xh(t)dt\leq \int_0^xg'(t)dt\qquad\textrm{for all} \;\,x\in (0,\pi).
$$
The statement now follows from a continuous version of the Hardy-Littlewood P\'olya (or Karamata) inequality, see  \cite[Theorem 2.1]{C} or \cite{HLP}.
\end{proof}
\begin{proof}[Proof of Theorem  \ref{thm:main2}]  It follows from Lemma \ref{le:HLP} and Proposition \ref{prop:area}(i) that in our situation the higher integrability of $J_f$ is at least as good as that of the derivative $h'$  on the interval $(0,\pi)$ where
$$
h(x):=\exp\big(-c'\log^{1-1/\alpha}(e+1/x)\big) $$
 on the interval $(0,\pi)$. Namely, $h$ is clearly decreasing near the origin which is enough for us in order to apply Lemma \ref{le:HLP}. We may safely leave to the reader to check that $\phi(h')$ is integrable near the origin with $\phi(y):=y\exp(\log^\beta (e+y))$ for $\beta<1-1/\alpha.$ In other words, we have
 \begin{equation*}\label{eq:alpha_jacob}
 \int_\D J_f\exp(\log^\beta (e+J_f))<\infty \qquad\textrm{for}\quad \beta<1-\alpha^{-1}.
 \end{equation*}
 By recalling that $|Df|^2=KJ_f$, the stated integrability of the derivative follows immediately by the elementary inequality
 $$
 xy\exp\big(\log^{\beta'} (e+xy)\big)\leq C\Big(\exp(px^{\alpha})+ y\exp\big(\log^\beta (e+y)\big)\Big), \qquad x,y\geq 1.
 $$
 for any $0<\beta'<\beta<1$ and $p>0$, and where $C=C(p,\beta,\beta',\alpha).$ The latter inequality follows easily by examining separately the cases $x<\exp((1/2)\log^\beta(e+y))$ and $x\geq\exp((1/2)\log^\beta(e+y))$.
\end{proof}

\begin{proof}[Proof of Lemma \ref{le:kaksi}]

Easy estimates that just apply differentiation show that the function 
$$
x\mapsto (1+\delta\log(1+1/x))^{-\beta}
$$
is concave for $x>0$ as soon as $\delta <(1+\beta)^{-1}$, which in our situation holds at least if $\delta<1/5.$ We now fix $p=1+2\varepsilon$, $\beta=1+\varepsilon$, with $\varepsilon\in (0,1/10)$ in Proposition \ref{prop:area} (ii)  and note   that  Lemma \ref{le:HLP} yields the integrability
\begin{eqnarray*}
\int_\D J(z,f) \log(e + J(z,f)) \leq \int_0^\pi h'(x)\log(e + h'(x))dx, 
\end{eqnarray*}
where $h(x):= A_2x^{\frac{\varepsilon}{24}}+ A_2\varepsilon^{-3-3\varepsilon}\Big(\log(1+1/x)\Big)^{-1-\varepsilon}\left( \int_\D e^{pK}\right)^{1/2}$ . Hence, if we   denote $A_3:=A'_2\left( \int_\D e^{pK}\right)^{1/2}$ with another universal constant  $A'_2$ we have
\begin{eqnarray*}
&h'(x)\le &A_3 \bigg(\varepsilon x^{-1+\frac{\varepsilon}{24}}+\frac{\varepsilon^{-3}}{x}\Big(\log(1+1/x)\Big)^{-2-\varepsilon} \bigg),
\end{eqnarray*}
Obviously $\log h'(x)\leq \log (A_3)+3\log(1/\varepsilon)+ 3\log(10/ x),$
so that noting that $\int_0^\pi h'(x)dx =h(\pi)\leq 10A_3\varepsilon^{-3}$ we obtain that
\begin{eqnarray*}
\int_\D J(z,f) \log(e + J(z,f)) &\leq &\int_0^\pi h'(x)\log(e + h'(x))dx\\
&\leq &10A_3\varepsilon^{-3}\big( \log (A_3)+\log(1/\varepsilon)\big)\\
& & \,+\,3A_3\int_0^\pi \bigg(\varepsilon x^{-1+\frac{\varepsilon}{24}}+\frac{\varepsilon^{-3}}{x}\Big(\log(1+1/x)\Big)^{-2-\varepsilon} \bigg)\log(10/x)dx\\
&\leq& A_3\log(A_3)10^6\varepsilon^{-4}.
\end{eqnarray*}
In the estimation of the last written integral we noted that $\int_0^\pi\varepsilon x^{-1+\frac{\varepsilon}{100}}\log(10/x)dx\leq 10^5\varepsilon^{-1}$ and estimated the second  integral from the above by
$
2\log (20)\Big(\varepsilon^{-3}\int_0^{1/2}\log(1/x)^{-1-\varepsilon}\frac{dx}{x}+ 3\varepsilon^{-3} )\leq 40\varepsilon^{-4}.
$

We next note the well-known inequality stating that for any $\varepsilon\in (0,1)$ and reals $x,y>0$ it holds that
\[
xy  \leq  x \log(e+x) + e^{(1+\varepsilon) y}
\]
(one simply checks that is true  for $\varepsilon=0$). The choice  $x=J_j(z)$, $y=K:=K_f(z)$, and integration over $\D$ finally yields that
\begin{eqnarray}\label{eq:crucial}
\int_\D |Df|^2 &\leq& A_4 \left( \int_\D e^{(1+\varepsilon)K}\right)^{1/2}\log \left( \int_\D e^{(1+\varepsilon)K}\right) \varepsilon ^{-4} + \int_\D e^{(1+\varepsilon)K} 
\leq
 A_5 \varepsilon ^{-4}\int_\D e^{(1+\varepsilon)K},
\end{eqnarray}
where $A_4,A_5$ are universal constants.

We are now ready to complete the proof of Lemma \ref{le:kaksi}. To that end  we need to establish for any $w$ with $\Re w = 1$ the key estimate
\[
\int_{\D} | g_w|^2 \leq \frac{C}{\varepsilon^4},
\]
with  constant $C$ does not depend on $\varepsilon$. Note that
this estimate implies the bound $M_1 \leq \frac{C_1}{\varepsilon^{2}}$ for some constant $C_1$. Moreover, estimating the integrability of  $|g_w|$ reduces to that of $|\partial(f_w)|$ because $|g_w|=|\partial(f_w)|$ a.e. since we have  $\Re w = 1.$

Let us first estimate the distortion $K(z,f_w)$. 
Assume that $\varepsilon\in (0,1/2)$ and consider the function $r(x):=1-x^{1+\varepsilon}-(1+\varepsilon/2)(1-x).$ We claim that $r(x)\geq 0$ for $x\in [1/2,1]$. As $r$ is concave with $r(1)=0$ and $r'(1)=-\varepsilon/2 <0$, it is enough to check that $r(1/2)\geq 0, $ or equivalently that
$1+\varepsilon/2\leq 2-2^{-\varepsilon}$. In turn this follows from the concavity of $\varepsilon\mapsto R(\varepsilon):=2-2^{-\varepsilon}-(1+\varepsilon/2)$ and by noting that $R(0)=R(1/2)=0$.

We thus have that $1-|\nu(z)|^{1+\varepsilon}\geq (1+\varepsilon/2)(1-|\nu(z)|)$ assuming that  $|\nu(z)|\geq 1/2$, and  we may estimate the distortion as follows:
\begin{align*}
K(z,f_w) &= \frac{1+|\nu(z)|^{1+\varepsilon}}{1-|\nu(z)|^{1+\varepsilon}} = -1 +  \frac{2}{1-|\nu(z)|^{1+\varepsilon}} \leq -1 +  \frac{2}{\min((1+\varepsilon/2)(1-|\nu(z)|),1-1/2^{1+\varepsilon})}\\
& \leq -1 +  \frac{2}{(1+\varepsilon/2)(1-|\nu(z)|)} + \frac{2}{1-1/2^{1+\varepsilon}} \leq 3 + \frac{2}{(1+\varepsilon/2)(1-|\nu(z)|)} \\
&= \frac{1}{1+\varepsilon/2} \left( -1 + \frac{2}{1-|\nu(z)|} \right) + 3 + \frac{1}{1+\varepsilon/2} \leq \frac{K(z,f)}{1+\varepsilon/2} + 4.
\end{align*}
It follows that $\int_{\D} e^{(1+\varepsilon/2)K(z,f_w)} \leq \int_{\D} e^{K(z,f)+4+2\varepsilon} \leq e^{5}\int_{\D} e^{K(z,f)}$.

In conclusion, an application of inequality \eqref{eq:crucial} (with $\varepsilon/4$ in place of $\varepsilon$) yields the desired uniform bound
\[
\int_{\D} |\partial(f_w)|^2 \leq \int_{\D} |Df_w|^2 \leq \frac{C}{\varepsilon^4}.
\]
\end{proof}

\section{Proof of Theorem \ref{th:radial}}\label{se:radialproof}

 Throughout this section we assume that $f:\D\to\D$ is a radial homeomorphism of the form 
$$
f(z)=\frac{z}{|z|}\phi(z),
$$
where $\phi:[0,1]\to[0,1]$ is an increasing homeomorphism. We also assume that $f$ is a map with exponentially integrable distortion (satisfying \eqref{eq:fed}). By the Lusin condition this implies that $\phi$ is absolutely continuous, and the assumed exponential integrability of $K_f$ can expressed as
\begin{eqnarray}\label{eq:ehto2}
\int_0^1\Big(e^{p\frac{r\phi'(r)}{\phi(r)}}+e^{p\frac{\phi(r)}{r\phi(r)}}\Big)rdr =C_0<\infty.
\end{eqnarray}
Our aim is to first prove an area distortion estimate for these maps.
\begin{proposition}\label{pr:area3}
Let   $f:\D\to\D$ be a radial homeomorphism of exponentially integrable distortion $($see \eqref{eq:fed}$)$.  Then for any measurable subset $E\subset \D$.
\begin{equation}\label{eq:area3}
|f(E)|\leq C\big(\log(1+1/|E|)\big)^{-p}.\nonumber
\end{equation}
The constant $C=C(p,C_0)$ is uniform for fixed $C_0$ and $p\in[1,2].$
\end{proposition}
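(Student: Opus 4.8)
\emph{Overall strategy.} The plan is to pass to the radial profile, reduce the statement to a one–dimensional weighted estimate, and then extract the exponent $p$ from the exponential integrability by a sharp comparison inequality. Since $f$ is a Sobolev homeomorphism with Lusin's property $(\mathcal N)$, we have $|f(E)|=\int_E J_f\,dm$, and here $J_f(z)=\phi'(r)\phi(r)/r$ depends only on $r=|z|$. By a rearrangement (bathtub) argument the supremum of $|f(E)|$ over sets of fixed area is attained on radially symmetric sets, so it suffices to bound $|f(E)|$ when $E=\{z:|z|^2\in S\}$ for a measurable $S\subset[0,1]$. Writing $v=r^2$ and $\Phi(v):=\phi(\sqrt v)^2$ — an absolutely continuous increasing bijection of $[0,1]$ onto itself with $\Phi(0)=0,\ \Phi(1)=1$ — one checks that $\Phi'(v)=J_f(\sqrt v)$, that $|E|=\pi|S|$ and $|f(E)|=\pi\int_S\Phi'(v)\,dv$, that the distortion is $K_f=\max\bigl(v\Phi'/\Phi,\ \Phi/(v\Phi')\bigr)$ (as a function of $v$), and that \eqref{eq:ehto2} becomes, after the further substitution $u=\log(1/v)$ with $k(u):=K_f(e^{-u/2})$, the bound $\int_0^\infty e^{p\,k(u)-u}\,du\le 2C_0$. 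The target inequality is thus $\int_S\Phi'\le C(\log(1+1/|S|))^{-p}$.

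\emph{Growth of $\Phi$ — the crux.} From $v\Phi'/\Phi\ge 1/K_f$ (immediate from the definition of $K_f$) one integrates $\Phi'/\Phi\ge 1/(vK_f)$ over $[v,1]$ and, using $\Phi(1)=1$, obtains
\[
\Phi(v)\le\exp\Bigl(-\int_0^{\log(1/v)}\frac{du}{k(u)}\Bigr).
\]
The key point is the estimate $\int_0^\ell\frac{du}{k(u)}\ge p\log\ell-C(p,C_0)$ for $\ell\ge p$. Here I deliberately avoid Jensen's inequality (which is lossy and only produces the exponent $p-\varepsilon$) and argue instead by comparison with $\int_p^\ell\frac pu\,du=p\log\ell-p\log p$:
\[
\int_p^\ell\Bigl(\frac pu-\frac1{k(u)}\Bigr)du=\int_p^\ell\frac{p\,k(u)-u}{u\,k(u)}\,du ,
\]
whose integrand is nonpositive where $p\,k(u)\le u$ and is at most $p/u\le 1$ on $B:=\{u\in[p,\ell]:p\,k(u)>u\}$; since $e^{p\,k(u)-u}>1$ on $B$, Chebyshev gives $|B|\le 2C_0$, so the right-hand side is $\le 2C_0$. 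This yields $\Phi(v)\le p^pe^{2C_0}(\log(1/v))^{-p}$ for $v\le e^{-p}$, while trivially $\Phi\le 1$ otherwise; together this gives $\Phi(v)\le C(p,C_0)(\log(1+1/v))^{-p}$, with $C$ uniform for $p\in[1,2]$.

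\emph{General $S$.} Given $|S|=\eta$ small, set $v_0:=\eta\bigl(\log(1+1/\eta)\bigr)^{p+2}$ and split $S=(S\cap[0,v_0])\cup(S\cap[v_0,1])$. On the first part $\int_{S\cap[0,v_0]}\Phi'\le\Phi(v_0)$, which the previous step controls by a constant times $(\log(1+1/\eta))^{-p}$ because $\log(1/v_0)\ge\tfrac14\log(1/\eta)$ for $\eta$ small. On the second part $\Phi'(v)\le\Phi(v)K_f(v)/v\le K_f(v)/v_0$, hence $\int_{S\cap[v_0,1]}\Phi'\le v_0^{-1}\int_0^\eta K_f^{\ast}$, where $K_f^\ast$ is the decreasing rearrangement of $v\mapsto K_f(\sqrt v)$ on $[0,1]$; the elementary bound $K_f^\ast(t)\le\tfrac1p\log(2C_0/t)$ (from $\int_0^1 e^{pK_f^\ast}\le 2C_0$) makes this term $O_{p,C_0}\bigl((\log(1+1/\eta))^{-p-1}\bigr)$. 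Adding the two parts, and disposing of bounded $\eta$ by the trivial bound $|f(E)|\le\pi$, completes the argument; all constants depend only on $p$ and $C_0$, and the thresholds are uniform on $p\in[1,2]$.

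\emph{The main obstacle.} The only genuinely delicate point is the weighted estimate in the second step: producing exactly the exponent $p$, rather than $p-\varepsilon$, rules out the naive Jensen or dyadic-decomposition arguments and forces one to exploit that the weight $p/u$ drops below $1$ precisely on the range $u\ge p$ where the exceptional set $\{p\,k>u\}$ — whose measure is controlled by the exponential integrability — lives. Everything else (the change of variables, the bathtub reduction, the splitting of $S$, and the rearrangement bounds) is routine.
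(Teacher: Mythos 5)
Your proof is correct, and it takes a genuinely different route from the paper's. The paper reduces to radial sets, splits $(0,1)$ into annuli $(e^{-n},e^{1-n})$, and on each annulus applies Jensen's inequality (on the probability measure $dr/r$) to the convex function $x\mapsto e^{p/x}$ to get $\log\big(\phi(e^{1-n})/\phi(e^{-n})\big)\geq p/(2n+\log C_0)$; telescoping yields $\phi(e^{-n})\lesssim n^{-p/2}$, and then a second Jensen application plus Cauchy--Schwarz gives $|f(E\cap A_n)|\lesssim n^{1-p}e^{n}\sqrt{|E|}$, which is summed over $n\leq N$ with $|E|=e^{-4N}$. You instead make a bathtub/rearrangement reduction, a clean change of variables $v=r^2$, $u=\log(1/v)$ that turns the hypothesis into $\int_0^\infty e^{pk(u)-u}\,du\leq 2C_0$, and then obtain the pointwise bound $\Phi(v)\lesssim(\log(1/v))^{-p}$ by direct comparison of $\int du/k(u)$ with $\int p/u\,du$: the signed integrand $\frac{pk(u)-u}{uk(u)}$ is nonpositive off the set $B=\{pk>u\}$ and at most $1$ on $B$, while $|B|\leq 2C_0$ by Chebyshev. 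For the tail of a general radial $S$ you use a single split at $v_0=\eta(\log(1+1/\eta))^{p+2}$ together with the rearrangement bound $K_f^\ast(t)\leq\tfrac1p\log(2C_0/t)$, replacing the paper's per-annulus Cauchy--Schwarz. Both arguments deliver the sharp exponent $p$; yours is arguably more transparent and avoids the second-moment estimate $\int(r\phi'/\phi)^2\,dr/r\lesssim n^2$. One small inaccuracy worth correcting: your parenthetical remark that Jensen "is lossy and only produces the exponent $p-\varepsilon$" is not quite right -- the paper's annulus-by-annulus Jensen does produce the full exponent $p$, because the loss in Jensen is additive per scale, not multiplicative; what would be lossy is a single global application. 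Your comparison argument is nevertheless a genuine simplification.
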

\begin{proof}
We shall denote by $C$ constants whose actual size if of no interest to us, and their value may change from line to line. We call the  set $E\subset \D$ `radial' if one has that  $z\in E$ if and only if $ |z|\in E$.
By a standard approximation argument it is enough to prove the claim in the case where $E$ is a disjoint union of sets of the form $\{a<|z|<b,\; \alpha_0<{\rm arg}(z)<\alpha_1\}$, and this case is easily reduced to the case of radial sets. Thus, we may assume that $E=\{|z|\in F,\}$ where $F\subset (0,1)$ is a disjoint union of open intervals.

For $n=1,2,\ldots$  we denote the dyadic annuli $A_n:=\{2^{-n}\leq |z|\leq 2^{1-n}\}.$ Our first goal is to estimate $\phi(r)$ from the above. To that end, fix $n\geq 1$ and note that by \eqref{eq:ehto2} and Jensen's inequality applied on the probability  measure $r^{-1}dr$ on $(e^{-n}, e^{1-n})$ and on the convex function $x\mapsto e^{p/x}$ yields 
\begin{eqnarray*}
\log\big(\phi(e^{1-n})/\phi(e^{-n})\big) &=&\int_{e^{-n}}^{e^{1-n}}\frac{r\phi'(r)}{\phi(r)}\frac{dr}{r}\geq p\left(\log\left(\int_{e^{-n}}^{e^{1-n}}\exp\bigg(p\frac{\phi(r)}{r\phi'(r)}\bigg)\frac{dr}{r}\right)\right)^{-1}\\
&\geq&p\left(\log\left(e^{2n}\int_{e^{-n}}^{e^{1-n}}\exp\bigg(p\frac{\phi(r)}{r\phi'(r)}\bigg)rdr\right)\right)^{-1}\\
&\geq & \frac{p}{\log C_0+2n}.
\end{eqnarray*}
Applying this for first $n$ annuli yields 
\begin{equation}\label{eq:obtain1}
\phi(e^{-n})\leq \exp \left(\sum_{k=1}^n\frac{p}{\log C_0+2n}\right)\leq Cn^{-p/2}.
\end{equation}

We next produce a very crude estimate of  area distortion for radial sets $E\subset A_n$. Write $E=\{|z|\in F,\}$ where $F\subset (e^{-n},e^{1-n})$ and note that $|F|\leq e^n|E|.$ Let us observe first that \eqref{eq:ehto2}, Jensen's inequality and the convexity of the map $x\mapsto\exp(p\sqrt{x+1})$ on $[0,\infty)$ yield that
\begin{eqnarray*}
\int_{e^{-n}}^{e^{1-n}}\Big(\frac{r\phi'(r)}{\phi(r)}\Big)^2\frac{dr}{r}&\leq& \left(\frac{1}{p}\log\left(\int_{e^{-n}}^{e^{1-n}}\exp\bigg(p\sqrt{\Big(\frac{r\phi'(r)}{\phi(r)}\Big)^2+1}\bigg)\frac{dr}{r}\right)\right)^2-1\\
&\leq&\left(\frac{1}{p}\log\left(e^{2n}\int_{e^{-n}}^{e^{1-n}}\exp\bigg(p\frac{r\phi'(r)}{\phi(r)}+p\bigg)rdr\right)\right)^{2}-1\\
&\leq & p^{-2}(\log C_0+2n+p)^2\;\leq \; Cn^2.
\end{eqnarray*}
We may then compute using the above estimate, the bound \eqref{eq:obtain1} and Cauchy-Schwarz to obtain for radial subsets of $E\subset A_n$
\begin{eqnarray}\label{eq:obtain2}
|f(E)|&=&2\pi \int_{F}\phi(r)\phi'(r)dr\leq 2\pi (\phi(e^{n+1}))^2\int_{F}\frac{r\phi'(r)}{\phi(r)}\frac{dr}{r}\nonumber\\
&\leq& \frac{C}{n^p} \sqrt{\int_{F}\frac{dr}{r}}\sqrt{\int_{e^{-n}}^{e^{1-n}}\Big(\frac{r\phi'(r)}{\phi(r)}\Big)^2\frac{dr}{r}}\nonumber\\
&\leq& \frac{C}{n^p}  \sqrt{|F|}e^{n/2}n\; \leq Cn^{1-p}e^n\sqrt{E}.
\end{eqnarray}
We finally observe that in the general case we may assume that $|E|=e^{-4N}$ for some integer $N\geq 1$. By using  the estimates \eqref{eq:obtain1} and  \eqref{eq:obtain2} it follows that
\begin{eqnarray*}
|f(E)|&\leq& |f(\{ |z|\leq e^{-N}\})|+ \sum_{n=1}^{N}|f(E\cap A_n)|\leq \pi(\phi(e^{-N}))^2+C\sum_{n=1}^N
e^nn^{1-p}\sqrt{e^{-4N}}\\
& \leq & \frac{C}{N^p}+ Ne^{-N}
 \leq  \frac{C'}{4N^p},
\end{eqnarray*}
as was to be shown.
\end{proof}

\begin{proof}[Proof of Theorem  \ref{th:radial}] One simply applies the area distortion estimate we just proved and obtains the analogue   of \eqref{eq:crucial} now with term $1/\varepsilon$ instead of $1/\varepsilon^4$. The first part follows then directly from Lemma \ref{le:apu3}. Similarly, part (ii) follows by keeping the track of the dependence of constant factors under this area distortion estimate.

\end{proof}

\bibliographystyle{amsplain}

\end{document}